\newtheorem{thm}{Theorem}[section]
\newtheorem{lem}[thm]{Lemma}
\newtheorem{prop}[thm]{Proposition}
\newcommand{\N}{\mathbb{N}}
\newcommand{\R}{\mathbb{R}}
\newcommand{\T}{\mathbb{T}}
\newcommand{\dvg}{\mathord{{\rm div}}\,}
\newcommand{\dvgphi}{\mathord{{\rm div}}^\phi}
\newcommand{\dvgphien}{\mathord{{\rm div}}^{\phi_n}}
\newcommand{\dvgphin}{\mathord{{\rm div}}^{\phi_{n+1}}}\newcommand{\Dphi}{\Delta^{\hspace{-0.05cm}\phi}}
\newcommand{\Dphien}{\Delta^{\hspace{-0.05cm}\phi_n}}
\newcommand{\Dphin}{\Delta^{\hspace{-0.05cm}\phi_{n+1}}}
\newcommand{\nablaphi}{\nabla^{\phi}}
\newcommand{\nablaphien}{\nabla^{\phi_n}}
\newcommand{\nablaphin}{\nabla^{\phi_{n+1}}}
\newcommand{\curl}{\mathord{{\rm curl}}\,}
\newcommand{\curlphin}{\mathord{{\rm curl}}^{\phi_{n+1}}}
\newcommand\scslash{\stretchrel*{$/$}{\textsc{e}}}
\newcommand\rpar{r_{\hspace{-0.05cm} \scslash \hspace{-0.1cm} \scslash}}
\theoremstyle{remark}
\newtheorem{rmk}[thm]{Remark}
\numberwithin{equation}{section}
\title[Ill-posedness Navier-Stokes transport noise]{Global existence and non-uniqueness for the Cauchy problem associated to 3D Navier-Stokes equations perturbed by transport noise}
\author[U. Pappalettera]{Umberto Pappalettera}
  \address{Fakult\"at f\"ur Mathematik, Universit\"at Bielefeld, D-33501 Bielefeld, Germany}
  \email{upappale(at)math.uni-bielefeld.de}
\keywords{Navier-Stokes equations, transport noise, convex integration}
\date\today
\begin{document}

\begin{abstract}
We show global existence and non-uniqueness of probabilistically strong, analytically weak solutions of the three-dimensional Navier-Stokes equations perturbed by Stratonovich transport noise.
We can prescribe either: \emph{i}) any divergence-free, square integrable intial condition; or \emph{ii}) the kinetic energy of solutions up to a stopping time, which can be chosen arbitrarily large with high probability. 
Solutions enjoy some Sobolev regularity in space but are not Leray-Hopf.
\end{abstract}

\maketitle

\section{Introduction}

In the present note we consider the Navier-Stokes equations on the three-dimensional torus $\T^3=(\R/2\pi)^3$ perturbed by Stratonovich transport noise:
\begin{align} \label{eq:NS}
\begin{cases}
d u
+ (u \cdot \nabla) u \, dt 
+ \sum_{k \in I} (\sigma_k \cdot \nabla) u \bullet dB^k
+ \nabla p \, dt
= \Delta u \, dt,
\\
\dvg u = 0.
\end{cases}
\end{align}
The noise is characterized by a finite collections of smooth divergence-free vector fields $\{\sigma_k\}_{k \in I}$ and i.i.d. standard Brownian motions $\{B^k\}_{k \in I}$ on a given filtered probability space $(\Omega,\mathcal{F},\{\mathcal{F}_t\}_{t \geq 0}, \mathbb{P})$ satisfying the usual conditions.

A \emph{probabilistically strong, analytically weak} solution to \eqref{eq:NS} is a stochastic process $u$, progressively measurable with respect to $\{\mathcal{F}_t\}_{t\geq 0}$, such that \eqref{eq:NS} holds true when integrated in time and space against a smooth, divergence free, compactly supported test function. Notice that the pressure $p$ is uniquely determined by $u$ and the relations (assuming each term makes sense at least as a distribution)
\begin{align*}
\dvg (u \cdot \nabla) u \, dt 
+ 
\sum_{k \in I} \dvg (\sigma_k \cdot \nabla) u \bullet dB^k
+ 
\Delta p \, dt
=
0,
\quad
\int_{\T^3} p = 0.
\end{align*}

Hereafter, for $p \in [1,\infty]$ we denote $L^p_x := L^p(\T^3,\R^n)$, $n \in \N$ depending on the context. A similar convention will be in force for other spaces of functions on $\T^3$ considered in the following, as the Sobolev spaces $W^{s,p}_x$, $s \in \R$, and $H^s_x := W^{s,2}_x$.
Our goal is to prove global existence and non-uniqueness of probabilistically strong, analytically weak solutions to \eqref{eq:NS} emanating from any prescribed divergence-free initial condition $u_0 \in L^2_x$. 
In particular, we can prove the following:

\begin{thm} \label{thm:u_0}
Given any divergence-free $u_0 \in L^2_x$ almost surely and independent of the Brownian motions $\{B^k\}_{k \in I}$, there exist infinitely many probabilistically strong, analytically weak solutions of \eqref{eq:NS} of class $u \in L^p_{loc}([0,\infty), L^2_x) \cap L^2_{loc}([0,\infty), W^{1,1}_x) \cap C_{loc}([0,\infty), H^{-1}_x)$ for every {$p \in [1,4)$} almost surely with initial condition $u|_{t=0} = u_0$.
\end{thm}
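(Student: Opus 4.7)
The plan is to remove the Stratonovich transport noise by a stochastic flow transformation and then run a convex integration scheme on the resulting random PDE, pathwise in $\omega$. Let $\phi_t$ denote the stochastic flow generated by $d\phi_t(x) = \sum_{k \in I} \sigma_k(\phi_t(x)) \bullet dB^k_t$, $\phi_0 = \mathrm{id}$; since the $\sigma_k$ are smooth and divergence-free, $\phi_t$ is a volume-preserving $C^\infty$-diffeomorphism for $\mathbb{P}$-a.e.\ $\omega$. Setting $v(t,x) := (D\phi_t)^{-1}(\phi_t(x))\,u(t,\phi_t(x))$ and applying a generalised It\^o--Kunita formula, the field $v$ formally satisfies a random PDE of the form
\begin{equation*}
\partial_t v + \dvgphi(v \otimes v) + \nablaphi q = \Dphi v, \qquad \dvgphi v = 0,
\end{equation*}
which contains no stochastic integrals, with initial datum $v|_{t=0}=u_0$. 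Here $\Dphi,\nablaphi,\dvgphi$ are the operators obtained by conjugating $\Delta,\nabla,\dvg$ with the flow $\phi_t$, consistent with the notation already introduced in the preamble. The pathwise bijection $u \leftrightarrow v$ transfers non-uniqueness at the level of $v$ back to $u$, so it suffices to produce infinitely many solutions of the random PDE.

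Next I would run a Buckmaster--Colombo--Luo--Vicol style convex integration on this random PDE, treating $\omega$ as a frozen parameter. The building blocks would be intermittent Mikado flows, tuned so that the Reynolds stresses $R_n$ satisfy $\|R_n\|_{L^1_{t,x}} \to 0$ and $v_n \to v$ in $L^p_t L^2_x$ for every $p<4$; the regularity class $L^p_{loc}(L^2_x) \cap L^2_{loc}(W^{1,1}_x) \cap C_{loc}(H^{-1}_x)$ claimed in the theorem is precisely what such schemes typically deliver, and non-uniqueness is built in at stage $n=0$ by choosing distinct smooth subsolutions compatible with the initial datum. Adaptedness requires that all iteration parameters (frequencies, amplitudes, cutoffs) be deterministic functions of time only: the random coefficients appearing in $\Dphi,\nablaphi,\dvgphi$ are functionals of $\phi|_{[0,t]}$, hence $\mathcal{F}_t$-measurable, so each iterate $v_n(t,\cdot)$ inherits progressive measurability. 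Since $\phi_t$ has only finite moments in smooth norms, I would localise via stopping times $\tau_R := \inf\{t : \|\phi_t\|_{C^N} \ge R\}$, carry out the scheme with $R$-dependent constants on $[0,\tau_R]$, and glue in the limit $R \to \infty$.

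An arbitrary $u_0 \in L^2_x$ that is independent of $\{B^k\}_{k \in I}$ is then handled by a density argument: approximate $u_0$ by smooth divergence-free $u_0^n \to u_0$ in $L^2_x$, apply the scheme to each $u_0^n$ to obtain $v_n$, and pass to the limit using the uniform control $\|v_n(t)\|_{L^2_x}^2 \le \|u_0^n\|_{L^2_x}^2 + \varepsilon_n$ built into the iteration, together with standard stochastic compactness. Independence of $u_0$ from the Brownian motions is exactly what is needed to keep the approximation $v_n$ adapted, since the deterministic iteration parameters can then be chosen without anticipating the noise through the initial datum.

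The step I expect to be the main obstacle is the interplay between adaptedness and the $\omega$-dependent coefficients of $\Dphi$: the antidivergence inequalities, the stability estimates for Mikado building blocks, and the gluing between stages of the iteration all have to be redone with constants that depend measurably on $\phi$ and are uniform on the random intervals $[0,\tau_R]$. Everything else in the scheme is by now classical; the novelty, and the technical heart of the argument, will lie in carrying out the deterministic convex-integration estimates in the $\phi$-dependent geometry while preserving progressive measurability throughout.
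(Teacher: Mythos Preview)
Your outline has the right large-scale architecture (flow transformation, pathwise convex integration, localisation by stopping times, gluing to extend in time), but there is a genuine gap in how you propose to attain an arbitrary $u_0\in L^2_x$. You write that you will approximate $u_0$ by smooth $u_0^n$, run the scheme for each $u_0^n$, and pass to the limit using a uniform bound $\|v_n(t)\|_{L^2_x}^2\le\|u_0^n\|_{L^2_x}^2+\varepsilon_n$ together with stochastic compactness. This fails on two counts. First, convex integration does not produce an energy inequality of that form: the perturbations are designed to pump energy in, and $\|v_n\|_{L^2_x}$ is controlled only by the scheme's internal parameters, not by $\|u_0\|_{L^2_x}$. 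Second, even if you had compactness, subsequential limits of convex-integration solutions give you no stability with respect to data, so you cannot conclude that the two distinct solutions built for each $u_0^n$ converge to two distinct limits, nor that the limit attains $u_0$ at $t=0$ in any useful sense. Relatedly, ``choosing distinct smooth subsolutions compatible with the initial datum'' at stage $n=0$ does not make sense when $u_0$ is merely $L^2$: there is no smooth $v_0$ with $v_0|_{t=0}=u_0$ to start from.

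What the paper does instead is a Da~Prato--Debussche splitting: solve the \emph{linear} Stokes problem with transport noise for $Z$ with $Z|_{t=0}=u_0$ (this exists, is pathwise unique, and carries all the roughness of $u_0$), set $z:=Z\circ\phi$, and run the convex integration on the remainder $v$, which satisfies a random PDE with an extra drift $\dvgphi((v+z)\otimes(v+z))$ and, crucially, initial datum $v|_{t=0}=0$. One can then start the iteration from $(v_0,q_0,\mathring{R}_0)=(0,0,z_0\otimes z_0)$ and extend to negative times trivially; the paper remarks explicitly that this extension is what fails without the trick. Non-uniqueness is obtained not at stage $0$ but by toggling an energy-pumping parameter $\gamma_{n_0}\in\{0,1\}$ at a late stage $n_0$, and unbounded $u_0$ is handled by partitioning $\Omega$ into $\{M-1\le\|u_0\|_{L^2_x}<M\}$ rather than by approximation. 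Two smaller points: the transformation used is $v(x,t)=u(\phi(x,t),t)$, not the covariant pullback with $(D\phi_t)^{-1}$ that you wrote; and the building blocks are intermittent jets, with a nontrivial new ingredient being an auxiliary deterministic iteration needed to close an $L^1_x$ estimate on the pressure.
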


If we don't require $u|_{t=0}$ to attain a particular initial datum $u_0$, then we can produce slightly more regular solutions and even prescribe the kinetic energy thereof up to an arbitrary large stopping time.

\begin{thm} \label{thm:energy}
There exists $\gamma>0$ with the following property.
For every $\varkappa \in (0,1)$ and $T \in (0,\infty)$ there exists a stopping time $\mathfrak{t}$ such that $\mathbb{P} \{ \mathfrak{t} \geq T \} \geq \varkappa$ and, for every energy profile $e$ on $[0,\infty)$ satisfying $\underline{e} := \inf_{t \geq 0} e(t) > 0$ and $\overline{e} := \| e \|_{C^1_t} < \infty$, there exists a probabilistically strong, analytically weak solution of \eqref{eq:NS} of class $u \in C_{loc}([0,\infty),H^{\gamma}_x \cap W^{1+\gamma,1}_x)$ almost surely with kinetic energy 
\begin{align*}
\int_{\T^3} |u(x,t)|^2 dx = e(t),
\quad
\forall t \in [0,\mathfrak{t}].
\end{align*}
Moreover, if $u_1$, $u_2$ are the solutions associated with kinetic energies $e_1$, $e_2$ and $e_1(t)=e_2(t)$ for every $t \in [0,T/2]$, then $u_1(x,t)=u_2(x,t)$ for every $x \in \T^3$ and $t \in [0,T/2]$. 
\end{thm}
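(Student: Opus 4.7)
The plan is to reduce \eqref{eq:NS} to a random PDE by absorbing the Stratonovich transport noise into a stochastic flow transformation, and then to run a pathwise convex integration scheme on the resulting random system. Let $\phi_t:\T^3\to\T^3$ be the smooth stochastic flow of diffeomorphisms generated by $d\phi_t=\sum_{k\in I}\sigma_k(\phi_t)\bullet dB^k$, and set $v(x,t):=u(\phi_t(x),t)$. Since all calculus rules are preserved under the Stratonovich integral, the transport noise cancels and $v$ satisfies a random Navier--Stokes-like equation of the schematic form
\[
\partial_t v + \dvgphi (v\otimes v) + \nablaphi p = \Dphi v,\qquad \dvgphi v=0,
\]
in which the randomness enters only through the coefficients of the $\phi$-twisted operators $\dvgphi,\nablaphi,\Dphi$ already earmarked by the macros in the preamble.

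To make the pathwise analysis uniform, I would introduce the stopping time
\[
\mathfrak{t}:=T\wedge\inf\bigl\{t\geq 0:\|\phi_t\|_{C^N_x}+\|\phi_t^{-1}\|_{C^N_x}>L\bigr\},
\]
with $N\in\N$ sufficiently large and $L=L(T,\varkappa)$ tuned so that standard moment estimates for stochastic flows of smooth vector fields give $\Prob{\mathfrak{t}\geq T}\geq\varkappa$. On the event $\{t\leq\mathfrak{t}\}$, the random operators above enjoy deterministic $C^N_x$ bounds in terms of $L$, placing the problem in a regime where convex integration can be carried out pathwise with controlled coefficients.

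The convex integration itself would follow a Buckmaster--Colombo--Vicol / Novack--Vicol style scheme for Navier--Stokes with prescribed kinetic energy, with intermittent Mikado-type building blocks composed with $\phi_t$ so as to respect the twisted constraint $\dvgphi(\cdot)=0$. The inductive data are a velocity $v_n$, a Reynolds stress $R_n$, and an energy gap $e(t)-\int_{\T^3}|v_n|^2$; the perturbation $w_{n+1}$ is designed to cancel $R_n$ and to realize the prescribed kinetic energy, with amplitudes and frequency $\lambda_{n+1}$ depending only on $L$, $\underline{e}$, $\overline{e}$ and the previous inductive bounds. The estimates therefore close uniformly on $[0,\mathfrak{t}]$, yielding a limit $v\in C_{loc}([0,\infty),H^\gamma_x\cap W^{1+\gamma,1}_x)$ for some small $\gamma>0$ with $\int_{\T^3}|v|^2=e$ on $[0,\mathfrak{t}]$; pulling back by $\phi^{-1}$ restores the required regularity and energy profile for $u$ on $[0,\mathfrak{t}]$.

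The consistency statement on $[0,T/2]$ would follow by designing the scheme so that $w_{n+1}(\cdot,t)$ depends on $(v_n,R_n,e)$ only through their values on a small temporal neighborhood of $t$, and by initializing with a deterministic $v_0$ (e.g.\ $v_0\equiv 0$) independent of $e$: a straightforward induction then yields $v_{1,n}\equiv v_{2,n}$ on $[0,T/2]$ whenever $e_1\equiv e_2$ there, and hence equality of the limits; composing with the same stochastic flow transfers the identity to $u_1\equiv u_2$. The main obstacle I anticipate is the pathwise execution of the convex integration step in the presence of the random operators $\Dphi$, $\nablaphi$, $\dvgphi$: the sharp algebraic identities underlying the Mikado ansatz must survive transport by $\phi$ up to errors controllable by $L$ alone, and the derivative loss when inverting $\dvgphi$ has to be absorbed by the frequency--amplitude hierarchy. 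Calibrating $N$, $\gamma$ and $L$ so that all gradient and perturbation estimates close simultaneously is where the delicate bookkeeping is concentrated.
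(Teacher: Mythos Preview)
Your overall architecture matches the paper: flow transformation to a random PDE, a stopping time controlling the flow, and a pathwise convex integration with intermittent building blocks designed to hit a prescribed energy. The causality argument for the consistency on $[0,T/2]$ is also exactly what the paper does. However, two genuine technical points are missing from your plan, and each is essential to close the scheme.

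First, your stopping time only controls the spatial regularity of $\phi$. The flow generated by Stratonovich transport noise is smooth in $x$ but only $C^{1/2-}$ in $t$; in particular $\dot\phi$ does not exist. In the convex integration, time derivatives of the perturbation (the temporal corrector $\partial_t w^{(t)}_{n+1}$ and the linear error $\partial_t w^{(p)}_{n+1}$) unavoidably produce $\dot\phi$ when the building blocks are composed with the flow. The paper resolves this by mollifying the driving noise, working at stage $n$ with a smooth flow $\phi_n$, and including in the iteration quantitative control of $\|\phi_{n+1}-\phi_n\|$ and $\|\dot\phi_{n+1}\|$ (\autoref{lem:flow}). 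This replaces the single random operator triple $(\dvgphi,\nablaphi,\Dphi)$ you propose by a sequence $(\dvgphien,\nablaphien,\Dphien)$ and generates an additional ``flow error'' $(\dvgphin-\dvgphien)(\cdots)$ in the Reynolds decomposition that has no analogue in deterministic Navier--Stokes schemes.

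Second, and linked to the first: that flow error contains a term $(\dvgphin-\dvgphien)(q_\ell\,\mathrm{Id})$, so closing the iteration forces you to propagate an $L^1_x$ bound on the pressure $q_n$. This is \emph{not} automatic from the usual convex integration estimates: the obvious candidate for the pressure increment is the $\mathcal{Q}^{\phi_{n+1}}$ part of the oscillation term, and its $L^1_x$ norm is too large. The paper's resolution (\autoref{ssec:pressure_est}) is to run, in parallel, an auxiliary \emph{untwisted} Navier--Stokes--Reynolds iteration that shares the same pressure increment $P$; on that deterministic system one has the elliptic identity $\Delta\tilde p_n=\dvg\dvg(\mathring{\tilde R}_n-\tilde u_n\otimes\tilde u_n)$, from which Schauder estimates give $\|P\|_{L^1_x}$ of the required size. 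This auxiliary device is singled out in the introduction as the main novelty, and your proposal does not account for it. Without it, the flow error cannot be absorbed and the inductive bounds on $\mathring R_{n+1}$ do not close.

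A minor point: the building blocks here are intermittent jets rather than Mikado flows; the intermittency is what allows the Laplacian term to be beaten in $L^p$ for $p$ close to $1$, which is how $W^{1+\gamma,1}_x$ regularity is obtained.
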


Our proofs strongly rely on convex integration techniques developed in previous works, both deterministic and stochastic.
For deterministic Navier-Stokes, solutions with prescribed energy profile were constructed in \cite{BuVi19,BuVi19b} using intermittent Beltrami flows.
Then, using intemittent jets as building blocks (first introduced in \cite{BuCoVi18}), \cite{BuMoSz20} proved non-uniqueness results for (deterministic) power-law fluids with prescribed initial condition.
In \cite{HoZhZh22b+} these results are extended to Navier-Stokes equations perturbed with additive noise. The main difficulty overcome by \cite{HoZhZh22b+} consists in controlling the interplay between the noise and the convex integration scheme, via the introduction of a sequence of stopping times and gluing solutions on different time intervals.
More recently, \cite{HoLaPa22+} applied for the first time a convex integration scheme to an equation perturbed by noise of transport type, thanks to a flow transformation: roughly speaking, the stochastic PDE perturbed by transport noise is reduced to a random PDE (that is, a partial differential equation with random coefficients but without stochastic integrals) and the latter is solved by a suitably designed convex integration scheme.
In particular, in \cite{HoLaPa22+} the authors considered Euler equations.
All these works have significantly influenced the present one. 

The main novelties contained in this note, apart from the results here presented, are mainly of technical nature. Particularly non-trivial is the fact that, contrary to previous convex integration schemes for Navier-Stokes equations, here we need to include iterative estimates in $L^1_x$ for the pressure.  
Those estimates are not deduced directly from the expression of the perturbation, but rather require \emph{ad hoc} arguments based on the introduction of an auxiliary iterative scheme, see \autoref{ssec:oscillation} and \autoref{ssec:pressure_est} for details.   
{Also, worth of mention is the bound $p<4$ in \autoref{thm:u_0}, due to the lack of uniform-in-time control over the $L^1_x$ norm of the Reynolds stress.}

To close this introduction, let us mention the main motivations behind our work.
In recent years, Stratonovich transport noise has shown regularizing properties (in the sense of improved well-posedness) in transport equation \cite{FlGuPr10}, point vortices dynamics \cite{FlGuPr11}, Vlasov-Poisson equations \cite{DeFlVi14}, Navier-Stokes equations in vorticity from \cite{FlLu21}, and related models \cite{FlGaLu21c,Lu21++,Ag22+,La22+}. 
Furthermore, its appearance in fluid dynamics equations is motivated by multiscale arguments \cite{FlPa21,FlPa22,DePa22+}.

Our results, however, rule out the possibility that Navier-Stokes equations in velocity form be regularized by transport noise (at least in the sense of restoring uniqueness; delay or prevention of blow-up of regular solutions remain plausible).

Moreover, we would like to point out that the mere global existence of probabilistically strong solutions to the Cauchy problem associated to Navier-Stokes equations in dimension three was unknown before this work. 
However, our solutions do not enjoy the desirable $L^\infty([0,\infty),L^2_x) \cap L^2([0,\infty),H^1_x)$ regularity of Leray-Hopf weak solutions.
Whether probabilistically strong solutions -- either Leray-Hopf or not -- can be also obtained without the use of convex integration techniques remains an important open problem.  

\section*{Acknowledgements}
This project has received funding from the European Research Council (ERC) under the European Union’s Horizon 2020 research and innovation programme (grant agreement No. 949981).

\section{Preliminaries} \label{sec:prelim}

\subsection{Flow transformation and reduction to a random PDE}

As already discussed in \cite{HoLaPa22+}, applying the flow transformation
\begin{align*}
v(x,t) &:= u(\phi(x,t),t), 
\quad
q(x,t) := p(\phi(x,t),t),
\end{align*}
where we denote by $\phi:\T^3 \times [0,\infty) \to \T^3$ the flow of measure preserving diffeomorphisms of the torus given by 
\begin{align} \label{eq:flow}
\phi(x,t) := x + \sum_{k \in I}\int_0^t \sigma_k(\phi(x,s)) \bullet dB^k(s),
\end{align}
it is possible to rewrite the SPDE \eqref{eq:NS} as a random PDE:
\begin{align} \label{eq:random_NS}
\begin{cases}
\partial_t v 
+ \dvgphi (v \otimes v)
+ \nablaphi q
= \Dphi v,
\\
\dvgphi v = 0 .
\end{cases}
\end{align}
Here $v \otimes v$ denotes the tensor product and the symbols $\dvgphi$, $\nablaphi$, $\Dphi$ are abbreviations for the space-time dependent differential operators
\begin{align*}
\dvgphi v := [\dvg(v \circ \phi^{-1})] \circ \phi,
\quad
\nablaphi q := [\nabla(q \circ \phi^{-1})] \circ \phi,
\quad
\Dphi v := [\Delta(v \circ \phi^{-1})] \circ \phi.
\end{align*} 
By the same arguments of \cite[Proposition A.1]{HoLaPa22+} one can see that systems \eqref{eq:NS} and \eqref{eq:random_NS} are equivalent, namely $u$ is a probabilistically strong, analytically weak solution of \eqref{eq:NS} if and only if $v$ is a probabilistically strong, analytically weak solution of \eqref{eq:random_NS}. We point out that in \eqref{eq:NS} and \eqref{eq:random_NS} the pressure terms $p$ and $q$ are always implicitly reconstructed from the velocity fields (assuming for instance null space average) and do not appear in the distributional formulation of the equations, when integrating against smooth test functions $h$ satisfying $\dvg h = 0$ and $\dvgphi h = 0$, respectively.
Thus, noticing also that $\int_{\T^3} |u(x,t)|^2 dx = \int_{\T^3} |v(x,t)|^2 dx$ for every $t$ since $\phi$ is measure preserving, it is sufficient to prove \autoref{thm:u_0} and \autoref{thm:energy} for solutions $v$ of \eqref{eq:random_NS}.

\subsection{Mollification of the noise}

Without loss of generality we assume that every realisation of the $\R^{|I|}$-valued driving noise $B=(B^k)_{k \in I}$ has H\"older $C^{1/2-}_{loc}$ time regularity.
Since we need smoothness of the flow during the construction, we can argue as in \cite{HoLaPa22+} and introduce a smooth mollifier $\vartheta:\R \to \R$ with support contained in $(0,1)$, and define for $t \in \R$ and some parameter $\varsigma_n>0$, $n \in \N$ to be properly chosen below:
\begin{align*}
\vartheta_n(t) \coloneqq\varsigma_n^{-1} \vartheta(t\varsigma_n^{-1}),
\quad
B_n(t) \coloneqq (B \ast \vartheta_n) (t)
=
\int_\R B(t-s) \vartheta_n(s) ds,
\end{align*}
with the convention that $B(t-s)=B(0)=0$ whenever $t-s$ is negative. 
Notice that $B_n$ is smooth at every time $t \in \R$ and it is identically zero for negatives times, being the support of $\vartheta$ contained in $(0,1)$. 
Finally, define $\phi_n$ as the unique solution of the integral equation
\begin{align} \label{eq:def_phi_n}
\phi_n(x,t) \coloneqq x + \sum_{k \in I}\int_0^t \sigma_k(\phi_n(x,s)) \,dB^k_n(s),
\quad
x \in \T^3, \,t \in \R.
\end{align}
Notice that $\phi_n(x,t)= \phi_n(x,0) = x$ for $t<0$, and for every fixed $t \in \R$ the map $\phi_n(\cdot,t)$ is measure preserving.
We extend the flow $\phi$ defined by \eqref{eq:flow} to $t<0$ similarly.

For any $r \geq 0$, stopping time $\mathfrak{t}$ and Banach space $E$, we denote $C^r_\mathfrak{t} E := C^r((-\infty,\mathfrak{t}],E)$.
The following is a consequence of \cite[Lemma 2.2]{HoLaPa22+}.
\begin{lem} \label{lem:flow}
Fix $T \in (0,\infty)$, $\varkappa \in (0,1)$, $\beta \in [0,1/4]$ and $\kappa,r \in \N$.
Then there exist a constant $C$, a stopping time $\mathfrak{t}$ such that $\mathbb{P} \{ \mathfrak{t} \geq T \} \geq \varkappa$, and a sequence of stopping times $\mathfrak{t}=\mathfrak{t}_1 \leq ... \leq \mathfrak{t}_L \leq ...$ such that $\mathfrak{t}_L \to \infty$ almost surely as $L \to \infty$ and for every $L,n \in \N$, $L \geq 1$ the following hold
\begin{gather*}
\| \phi_{n+1} - \phi_n \|_{C^\beta_{\mathfrak{t}_L} C^\kappa_x}
\leq CL (n+1) \varsigma_n^{1/4-\beta},
\quad
\| \phi_n \|_{C^{1/4}_{\mathfrak{t}_L} C^\kappa_x}
\leq CL,
\\
\| \phi_{n+1}^{-1} - \phi_n^{-1} \|_{C^\beta_{\mathfrak{t}_L} C^\kappa_x}
\leq CL (n+1) \varsigma_n^{1/4-\beta}, 
\quad
\| \phi_n^{-1} \|_{C^{1/4}_{\mathfrak{t}_L} C^\kappa_x}
\leq CL,
\end{gather*}
as well as
\begin{align*}
\| \phi_n \|_{C^r_{\mathfrak{t}_L} C^\kappa_x} 
\leq CL
\varsigma_n^{{1/4}-r},
\quad
\| \phi_n^{-1} \|_{C^r_{\mathfrak{t}_L} C^\kappa_x} 
\leq CL 
\varsigma_n^{{1/4}-r}.
\end{align*}
\end{lem}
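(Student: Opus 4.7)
The idea is to first localise the randomness by stopping times defined through the Hölder norm of $B$, then propagate classical ODE flow estimates under the regularity of the mollified driver $B_n$. Concretely, since almost every path of $B$ lies in $C^{1/2-}_{loc}$, the quantity $\|B\|_{C^{1/4}([0,t])}$ is a.s.\ finite and continuous in $t$. Define
\begin{align*}
\mathfrak{t}_L \coloneqq \inf\bigl\{ t \geq 0 : \|B\|_{C^{1/4}([0,t])} > L \bigr\},
\end{align*}
so that $\mathfrak{t}_L \uparrow \infty$ almost surely. By Chebyshev's inequality applied to a moment of $\|B\|_{C^{1/4}([0,T])}$, one may fix $L_0=L_0(T,\varkappa)$ large enough that $\mathbb{P}\{\mathfrak{t}_{L_0} \geq T\} \geq \varkappa$, and declare $\mathfrak{t} \coloneqq \mathfrak{t}_{L_0}$, with $\mathfrak{t}_1 = \mathfrak{t}$ and $\mathfrak{t}_L$ for $L \geq L_0$ as above (pad the initial indices trivially if needed). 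On $(-\infty, \mathfrak{t}_L]$ the one-sided mollification $B_n = B \ast \vartheta_n$ satisfies the standard estimates
\begin{align*}
\|B_n\|_{C^{1/4}_{\mathfrak{t}_L}} \lesssim L, \qquad \|B_n\|_{C^r_{\mathfrak{t}_L}} \lesssim L\,\varsigma_n^{1/4-r} \ \ (r \geq 1), \qquad \|B_{n+1}-B_n\|_{C^\beta_{\mathfrak{t}_L}} \lesssim L\,\varsigma_n^{1/4-\beta} \ \ (\beta \in [0,1/4]),
\end{align*}
where the last bound comes from a telescoping comparison of $B_n - B$ and $B_{n+1}-B$ against the Hölder norm of $B$ itself. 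These are the scalar building blocks.

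Next I would transfer these bounds to $\phi_n$. Since $\dot B_n^k$ is smooth and $\sigma_k \in C^\infty$, \eqref{eq:def_phi_n} defines a globally smooth, measure-preserving flow. From the integral form $\phi_n(x,t)-x = \sum_k \int_0^t \sigma_k(\phi_n(x,s))\,dB^k_n(s)$ and the boundedness of $\sigma_k$, the Hölder-$1/4$ bound in time is immediate and gives $\|\phi_n\|_{C^{1/4}_{\mathfrak{t}_L} C^\kappa_x} \lesssim L$; spatial derivatives up to order $\kappa$ are controlled by differentiating the ODE and applying Grönwall's lemma to the variational equation for $D^j \phi_n$, using that $\|\dot B_n\|_{L^1_{\mathfrak{t}_L}}$ stays bounded in terms of $L$ and $\mathfrak{t}_L$. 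For $r \geq 1$ the time derivative $\partial_t^r \phi_n$ can be expressed through Faà di Bruno in terms of $\partial_t^j B_n$ and spatial derivatives of $\sigma_k \circ \phi_n$, and the factor $\varsigma_n^{1/4-r}$ comes from the corresponding bound on $\|B_n\|_{C^r}$. The inverse flow $\phi_n^{-1}$ is treated analogously: it solves an ODE of the same form with reversed time and vector fields $-\sigma_k$, or equivalently one uses $D\phi_n^{-1} = (D\phi_n)^{-1} \circ \phi_n^{-1}$ together with the already established bounds on $D\phi_n$.

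For the difference estimates I would subtract the two flow equations and write
\begin{align*}
\phi_{n+1}(x,t) - \phi_n(x,t) = \sum_k \int_0^t \!\bigl[\sigma_k(\phi_{n+1}) - \sigma_k(\phi_n)\bigr] dB^k_{n+1} + \sum_k \int_0^t \sigma_k(\phi_n)\, d(B^k_{n+1}-B^k_n).
\end{align*}
The first term is absorbed via the Lipschitz bound on $\sigma_k$ and Grönwall (costing $\exp(CL) \lesssim L$ up to renaming the constant on the event $\{\mathfrak{t}_L \geq t\}$), while the second term is controlled by the $B_{n+1}-B_n$ estimate, yielding the desired $CL\,\varsigma_n^{1/4-\beta}$ with $\beta \in [0,1/4]$. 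Spatial derivatives up to order $\kappa$ are obtained iteratively by differentiating this identity; each differentiation brings in additional Faà di Bruno terms involving lower-order differences and derivatives of $\phi_n$, all of which have already been estimated, and the linear factor $(n+1)$ appears from a straightforward induction on $\kappa$ (one derivative being lost at each step and recouped through the induction hypothesis). The inverse difference $\phi_{n+1}^{-1} - \phi_n^{-1}$ is reduced to the direct one by composition with the inverses and the already established bounds on their spatial derivatives.

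\textbf{Main obstacle.} The delicate point is the bookkeeping: one has to guarantee that the constants appearing through Grönwall depend on $L$ only linearly (not exponentially), and that the loss in $\varsigma_n$ tracked along both the time and spatial derivatives adds up exactly to $\varsigma_n^{1/4-\beta}$ (respectively $\varsigma_n^{1/4-r}$), and not worse. This is essentially the content of \cite[Lemma 2.2]{HoLaPa22+}, to which one appeals after verifying that the localisation by $\mathfrak{t}_L$ is compatible with the bounds stated therein; the difference here is purely cosmetic, namely the extraction of the probabilistic stopping time $\mathfrak{t}$ with $\mathbb{P}\{\mathfrak{t} \geq T\} \geq \varkappa$, which is achieved by the Chebyshev step above.
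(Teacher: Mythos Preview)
The paper does not prove this lemma at all: it simply states that the result ``is a consequence of \cite[Lemma 2.2]{HoLaPa22+}''. Your proposal ultimately defers to the same reference, so at the level of strategy you are aligned with the paper.

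That said, two points in your sketch are inaccurate and would mislead a reader. First, the step ``costing $\exp(CL) \lesssim L$ up to renaming the constant'' is not a valid manipulation: an exponential in $L$ cannot be absorbed into a linear bound by renaming $C$. What actually happens (and what you correctly flag as the main obstacle) is that the stopping times themselves are calibrated so that the Gr\"onwall-type factor is controlled linearly in $L$; this is part of the content of \cite[Lemma~2.2]{HoLaPa22+} and cannot be shortcut. Second, your explanation for the factor $(n+1)$ is wrong: it does not arise from ``induction on $\kappa$'' (note $\kappa$ is fixed throughout and has nothing to do with the mollification index $n$). The factor $(n+1)$ tracks the dependence on the mollification scale through the comparison of successive drivers $B_{n+1}$ and $B_n$, and is inherited from the precise estimates in \cite{HoLaPa22+}. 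Since both of these are exactly the points where you anyway appeal to the cited lemma, the proposal is acceptable as a reduction to \cite[Lemma~2.2]{HoLaPa22+}, but you should remove or correct these two heuristic justifications rather than present them as arguments.
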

\begin{rmk}
In the previous lemma we are in fact only assuming that $B$ has $H$-H\"older trajectories for some $H>1/4$. This is coherent with the fact that we can replace the noise $B$ in \eqref{eq:NS} with a fractional Brownian motion $B^H$ with Hurst parameter $H>1/4$, see also \cite[Remark 2.3]{HoLaPa22+}.
\end{rmk}

\begin{rmk}[On globality-in-time of solutions]
{
Both \autoref{thm:u_0} and \autoref{thm:energy} state the existence of global-in-time solutions to \eqref{eq:NS}, which we obtain by construction.
Indeed, our convex integration scheme relies on perturbations defined for every time $t>0$.
Then we use the stopping times $\mathfrak{t}_L$ just to give good bounds on suitable norms of the solutions up to time $\mathfrak{t}_L$, depending in $L$ as $L^{\mathfrak{m}}$, $\mathfrak{m} :=m^{m(1+\log\log L)}$ for some integer $m$ independent of $L$. This approach was previously pursued in \cite{HoLaPa22+} for Euler equations.
However, for the sake of simplicity we prefer to work only on the time interval $[0,\mathfrak{t}]$, omitting the verifications on $[0,\mathfrak{t}_L]$ for $L>1$.
}
\end{rmk}

\subsection{Intermittent jets} \label{ssec:jets}

The main building blocks of our convex integration scheme are the so-called \emph{intermittent jets}, first introduced in \cite{BuCoVi18}, cf. also \cite[Section 7.4]{BuVi19b}.

Recall \cite[Lemma 6.6]{BuVi19b} (see also \cite[Lemma 2.4]{DaSz17}), according to which there exists a finite set $\Lambda\subset \mathbb{S}^2\cap \mathbb{Q}^3$ such that for each $\xi\in \Lambda$ there exists a  $C^\infty$-function $\gamma_\xi:\overline{B_{1/2}}(\mathrm{Id})\rightarrow\mathbb{R}$ such that
\begin{equation*}
R=\sum_{\xi\in\Lambda}\gamma_\xi^2(R)\, \xi\otimes \xi,
\quad
\forall R \in \overline{B_{1/2}}(\mathrm{Id}).
\end{equation*}
Here $\mathbb{S}^2\cap \mathbb{Q}^3$ is the set of points of the unit sphere in $\R^3$ with rational coordinates, and $\overline{B_{1/2}}(\mathrm{Id}) \subset \mathcal{S}^{3\times 3}$ denotes the closed ball of radius $1/2$ around the identity matrix $\mathrm{Id}$, in the space of $3 \times 3$ symmetric matrices.

For each $\xi\in \Lambda$ let $A_\xi\in \mathbb{S}^2\cap \mathbb{Q}^3$ be an orthogonal vector to $\xi$. Then for each $\xi\in\Lambda$ we have that $\{\xi, A_\xi, \xi\times A_\xi\}\subset \mathbb{S}^2\cap \mathbb{Q}^3$ is an orthonormal basis of $\mathbb{R}^3$.
We label by $n_*$ the smallest positive natural such that $\{n_*\xi, n_*A_\xi, n_*\xi\times A_\xi\}\subset \mathbb{Z}^3$ for every  $\xi\in \Lambda$.

Next, let $\Theta:\R^2 \to \R$ and $\psi:\mathbb{R}\rightarrow\mathbb{R}$ be smooth functions with support in the ball of radius $1$ (of the respective domains), renormalized such that $\theta :=-\Delta\Theta$ and $\psi$ obey $\| \theta \|_{L^2} = 2\pi$ and $\|\psi \|_{L^2} = (2\pi)^{1/2}$. 
For parameters $r_\perp, \rpar>0$ such that $r_\perp \ll \rpar \ll 1$, define the rescaled cut-off functions
\begin{equation*}
\theta_{r_\perp}(x_1,x_2)
=
\frac{1}{r_\perp}
\theta\left(\frac{x_1}{r_\perp},\frac{x_2}{r_\perp}\right),
\quad
\Theta_{r_\perp}(x_1,x_2)
=
\frac{1}{r_\perp}
\Theta\left(\frac{x_1}{r_\perp},\frac{x_2}{r_\perp}\right),
\quad 
\psi_{\rpar}(x_3)
=
\frac{1}{\rpar^{1/2}}
\psi\left(\frac{x_3}{\rpar}\right).
\end{equation*}
We periodize $\theta_{r_\perp}, \Theta_{r_\perp}$ and $\psi_{\rpar}$ so that they are viewed as periodic functions on $\mathbb{T}^2, \mathbb{T}^2$ and $\mathbb{T}$ respectively.
Consider a large real number $\lambda$ such that $\lambda r_\perp\in\mathbb{N}$, and a large time oscillation parameter $\mu>0$. For every $\xi\in \Lambda$ we introduce
\begin{equation*}\aligned
\psi_{\xi}(x,t)
&:=
\psi_{\rpar}(n_*r_\perp\lambda(x\cdot \xi+\mu t)),
\\ 
\Theta_{\xi}(x)
&:=
\Theta_{r_{\perp}}(n_*r_\perp\lambda(x-\alpha_\xi)\cdot A_\xi, n_*r_\perp\lambda(x-\alpha_\xi)\cdot(\xi\times A_\xi)),
\\
\theta_{\xi}(x)
&:=
\theta_{r_{\perp}}(n_*r_\perp\lambda(x-\alpha_\xi)\cdot A_\xi, n_*r_\perp\lambda(x-\alpha_\xi)\cdot(\xi\times A_\xi)),
\endaligned
\end{equation*}
where $x \in \T^3$, $t \in \R$ and $\alpha_\xi\in\mathbb{R}^3$ are shifts to ensure that $\{\Theta_{\xi}\}_{\xi\in\Lambda}$ have mutually disjoint support.

The intermittent jets $W_{\xi}:\mathbb{T}^3\times \mathbb{R}\rightarrow\mathbb{R}^3$ are then defined for every $\xi \in \Lambda$ as
\begin{equation*}
W_{\xi}(x,t)
:=
\xi\,\psi_{\xi}(x,t) \, \theta_{\xi}(x).
\end{equation*}
Each $W_{\xi}$ has zero space average and is $(\T/r_\perp \lambda)^3$-periodic in its space variable.
Moreover, by the choice of $\alpha_\xi$ we have that
$$
W_{\xi}\otimes W_{\xi'}\equiv0, 
\quad \textrm{for } \xi\neq \xi'\in\Lambda,
$$
and for every $t \in \R$
$$
\frac{1}{(2\pi)^3}\int_{\mathbb{T}^3}
W_{\xi}(x,t)\otimes W_{\xi}(x,t)dx=\xi\otimes\xi.
$$
These facts, combined with \cite[Lemma 6.6]{BuVi19b}, imply that
\begin{equation}\label{geometric equality}
\frac{1}{(2\pi)^3}
\sum_{\xi\in\Lambda}\gamma_\xi^2(R)
\int_{\mathbb{T}^3}W_{\xi}(x,t)\otimes W_{\xi}(x,t)dx=R,
\quad
\forall R \in \overline{B_{1/2}}(\mathrm{Id}),
\quad
\forall t \in \R.
\end{equation}

Since $W_{\xi}$ is not divergence free, we introduce the compressibility corrector term
\begin{equation*}
W_{\xi}^{(c)}
:=
\frac{1}{n_*^2\lambda^2} \nabla \psi_{\xi}\times \textrm{curl}(\Theta_{\xi}\xi)
=
\curl\curl V_{\xi}-W_{\xi},
\quad
V_{\xi}
:=
\frac{1}{n_*^2\lambda^2}\,\xi\,\psi_\xi\,\Theta_\xi,
\end{equation*}
so that 
$$
\dvg \left(W_{\xi}+W_{\xi}^{(c)}\right) = 0.
$$

Next, we  recall the key   bounds from \cite[Section 7.4]{BuVi19b}. For integers $0 \leq N, M\leq 10$ and $p\in [1,\infty]$ the following hold provided $\rpar^{-1}\ll r_{\perp}^{-1}\ll \lambda$
\begin{align*}
\|\nabla^N \partial_t^M \psi_{\xi}\|_{C_t L^p_x}
&\lesssim 
\rpar^{1/p-1/2} \left(\frac{r_\perp\lambda}{\rpar}\right)^N
\left(\frac{r_\perp\lambda \mu}{\rpar}\right)^M,
\\
\|\nabla^N \theta_{\xi}\|_{ L^p_x}+\|\nabla^N\Theta_{\xi}\|_{ L^p_x}
&\lesssim 
r^{2/p-1}_\perp\lambda^N,
\\
\|\nabla^N \partial_t^M W_{\xi}\|_{C_t L^p_x}
&\lesssim 
r^{2/p-1}_\perp \rpar^{1/p-1/2}\lambda^N\left(\frac{r_\perp\lambda\mu}{\rpar}\right)^M,
\\
\|\nabla^N \partial_t^MW_{\xi}^{(c)}\|_{C_t L^p_x}
&\lesssim 
r_\perp^{2/p} \rpar^{1/p-3/2}\lambda^N\left(\frac{r_\perp\lambda\mu}{\rpar}\right)^M,
\\
\|\nabla^N \partial_t^M V_{\xi}
\|_{C_t L^p_x}
&\lesssim 
r^{2/p-1}_\perp \rpar^{1/p-1/2}\lambda^{N-2}\left(\frac{r_\perp\lambda\mu}{\rpar}\right)^M,
\end{align*}
where $\lesssim$ is an abbreviation for inequality $\leq$ up to an unimportant multiplicative constant, that we keep implicit for notational simplicity. 
In the lines above, the implicit constants may depend on $p$, but are independent of $N,M$ (because we restricted to $N,M \leq 10$) and $\lambda, r_\perp, \rpar, \mu$.

As a notational convention, hereafter we denote with the bold symbols $\boldsymbol{\psi}_\xi$, $\boldsymbol{\theta}_\xi$ etc. the respective quantities precomposed with the flow $\phi_{n+1}$, namely $\boldsymbol{\psi}_\xi := \psi_\xi \circ \phi_{n+1}$, $\boldsymbol{\theta}_\xi := \theta_\xi \circ \phi_{n+1}$ etc.
When composing with the flow $\phi_{n+1}$ these bounds become
\begin{align*}
\|\nabla^N \partial_t^M \boldsymbol\psi_{\xi}\|_{C_t L^p_x}
&\lesssim 
\rpar^{1/p-1/2} \left(\frac{r_\perp\lambda}{\rpar}\right)^N
\left(\frac{r_\perp\lambda \mu}{\varsigma_{n+1}\rpar}\right)^M,
\\
\|\nabla^N \partial_t^M \boldsymbol\theta_{\xi}\|_{C_t L^p_x}+\|\nabla^N \partial_t^M \boldsymbol\Theta_{\xi}\|_{C_t L^p_x}
&\lesssim 
r^{2/p-1}_\perp\lambda^N \left(\frac{\lambda}{\varsigma_{n+1}} \right)^M,
\\
\|\nabla^N \partial_t^M \mathbf{W}_{\xi}\|_{C_t L^p_x}
&\lesssim 
r^{2/p-1}_\perp \rpar^{1/p-1/2}\lambda^N\left(\frac{r_\perp\lambda\mu}{\varsigma_{n+1}\rpar}\right)^M,
\\
\|\nabla^N \partial_t^M \mathbf{W}_{\xi}^{(c)}\|_{C_t L^p_x}
&\lesssim 
r_\perp^{2/p} \rpar^{1/p-3/2}\lambda^N\left(\frac{r_\perp\lambda\mu}{\varsigma_{n+1}\rpar}\right)^M,
\\
\|\nabla^N \partial_t^M \mathbf{V}_{\xi}
\|_{C_t L^p_x}
&\lesssim 
r^{2/p-1}_\perp \rpar^{1/p-1/2}\lambda^{N-2}\left(\frac{r_\perp\lambda\mu}{\varsigma_{n+1}\rpar}\right)^M.
\end{align*}

\section{Construction of solutions with prescribed energy} \label{sec:convex_energy}

As usual in convex integration schemes, for every $n \in \N$ we will consider the Navier-Stokes-Reynolds system for $t \in \R$
\begin{align} \label{eq:random_NS-reynolds}
\partial_t v_n 
&+ 
\dvgphien(v_n\otimes v_n)
+
\nablaphien q_n
-
\Dphien v_n
=
\dvgphien \mathring{R}_n,
\end{align}
where the \emph{Reynolds stress} $\mathring{R}_n$ takes values in the space of $3 \times 3$ symmetric traceless matrices.
Similarly to what done in \cite{HoLaPa22+} we do not impose the divergence-free condition $\dvg^{\phi_n} v_n = 0$ at every $n \in \N$, but rather we only require a decay of certain norms of $\dvg^{\phi_n} v_n$ along the iteration.
For technical reasons we also require $\int_{\T^3} v_n = 0$ for every $n \in \N$.
Then, given a sequence $(v_n,q_n,\phi_n,\mathring{R}_n)$ of progressively measurable solutions to the random Navier-Stokes-Reynolds system \eqref{eq:random_NS-reynolds}, we construct a solution $v$ of \eqref{eq:random_NS} showing the convergences, with respect to suitable topologies:
\begin{align*}
v_n \to v,
\quad
\mathring{R}_n \to 0,
\quad
\dvg^{\phi_n} v_n \to 0.
\end{align*} 
Notice that $\phi_n \to \phi$ by construction, and we do not need to prove the convergence of $q_n$ since the limit $q$ can be recovered by $v$, $\phi$, and the condition $\int_{\T^3} q = 0$.

\subsection{Iterative assumptions and main proposition}
We shall define
\begin{align*}
\lambda_n := a^{b^n},
\qquad
\delta_n := \lambda_1^{2\beta} \lambda_n^{-2\beta},
\end{align*} 
for some parameters $a \in 2\N$, $b \in 7\N$ sufficiently large and $\beta \in (0,1)$ sufficiently small.

Without loss of generality we can suppose that the energy profile $e$ in the statement of \autoref{thm:energy} is defined for negative times also, and $\inf_{t \in \R} e(t) > \underline{e} > 0$ and $\| e \|_{C^1_t} < \overline{e} < \infty$ (possibly changing the values of $\underline{e}$ and $\overline{e}$).

We require the following iterative estimates on the energy error
\begin{equation} \label{est:energy}
\frac{3}{4} \delta_{n+1} e(t) 
\leq  
e(t) - \int_{\mathbb{T}^3} |v_n(x,t)|^2 dx 
\leq 
\frac{5}{4} \delta_{n+1} e(t),
\quad
\forall t \leq \mathfrak{t},
\end{equation}
as well as the integrability bounds
\begin{align} \label{est:integrab}
\| q_n \|_{C_{\mathfrak{t}} L^1_x}
\leq 
C_q \lambda_n^{1/1000},
\qquad
\| \mathring{R}_n \|_{C_{\mathfrak{t}} L^1_x}
\leq 
C_R  \delta_{n+2},
\end{align}
the derivative controls
\begin{gather} \label{est:deriv}
\| v_n \|_{C_{\mathfrak{t}}W_x^{1,1}}
\leq 
C_v \sum_{k=0}^n \delta_{k+2}^2,
\qquad
\| v_n \|_{C^1_{\mathfrak{t},x}}
\leq 
C_v \lambda_n^{6},
\qquad
\| v_n \|_{C^1_{\mathfrak{t}}C_x^3}
\leq 
C_v \lambda_n^{12},
\\
\| q_n \|_{C_{\mathfrak{t}} C_x^2}
\leq 
C_q \lambda_n^{12},
\qquad
\| \mathring{R}_n \|_{C_{\mathfrak{t}} C_x^1}
\leq 
C_R \lambda_n^{20}, 
\end{gather}
and the estimates on the divergence
\begin{equation} \label{est:div}
\| \dvgphien v_n \|_{C_{\mathfrak{t}} H_x^{-1}}
\leq 
C_v \delta_{n+3}^3,
\qquad
\| \dvgphien v_n \|_{C_{\mathfrak{t}} L^1_x}
\leq 
C_v \delta_{n+3}^3.
\end{equation}
for some universal constants $C_v,C_q \in (0,\infty)$ and $C_R \in (0,\overline{e}/48)$.

\autoref{thm:energy} is then a consequence of the following:
\begin{prop}\label{prop:it}
There exists a choice of parameters $a,b,\beta$ with the following property.
Let $(v_n,q_n,\phi_n,\mathring{R}_n)$, $n \in \N$ be a solution of \eqref{eq:random_NS} with $\phi_n$ given by \eqref{eq:def_phi_n} and satisfying the inductive estimates \eqref{est:energy}, \eqref{est:integrab}, \eqref{est:deriv} and \eqref{est:div}.
Then there exists a quadruple $(v_{n+1},q_{n+1},\phi_{n+1},\mathring{R}_{n+1})$, solution of \eqref{eq:random_NS} with $\phi_{n+1}$ given by \eqref{eq:def_phi_n}, satisfying the same inductive estimates with $n$ replaced by $n+1$ and such that
\begin{align} \label{eq:difference}
\| v_{n+1} - v_{n} \|_{C_{\mathfrak{t}}L_x^2}
\leq
C_v \delta_{n+1}^{1/2},
\qquad
\| v_{n+1} - v_{n} \|_{C_{\mathfrak{t}}W_x^{1,1}}
\leq
C_v \delta_{n+3}^2.
\end{align}
Moreover, $(v_{n+1},q_{n+1},\mathring{R}_{n+1})$ at any time $t$ depends only on the values of $(v_n,q_n,\mathring{R}_n,\phi_{n+1},e)$ at times $s \leq t$.
\end{prop}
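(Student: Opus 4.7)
The plan is to carry out one step of a convex integration scheme for the random Navier-Stokes-Reynolds system \eqref{eq:random_NS-reynolds}, using intermittent jets precomposed with $\phi_{n+1}$ as building blocks. First I would mollify $v_n$, $q_n$, $\mathring{R}_n$ and the flow $\phi_n$ in time at a scale $\ell = \lambda_{n+1}^{-\alpha}$ for a small $\alpha>0$, obtaining smooth iterates $v_\ell, q_\ell, \mathring{R}_\ell$ whose mollification errors are of order $\delta_{n+2}$ thanks to \eqref{est:deriv}. Next I would form the amplitude $a_\xi = \rho^{1/2}\gamma_\xi(\mathrm{Id}-\mathring{R}_\ell/\rho)$ for each $\xi\in\Lambda$, with $\rho(t)$ proportional to the energy gap $e(t)-\int_{\T^3}|v_n|^2$, which by \eqref{est:energy} is of size $\delta_{n+1}e(t)$. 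The geometric identity \eqref{geometric equality} together with the disjoint-support of the jets then forces $\sum_\xi a_\xi^2 \, \mathbf{W}_\xi\otimes \mathbf{W}_\xi$ to cancel $\mathring{R}_\ell$ at low frequency and simultaneously deliver the desired energy increment, so that \eqref{est:energy} upgrades to level $n+1$.

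The perturbation is $w_{n+1} = w^{(p)} + w^{(c)} + w^{(t)}$, with principal part $w^{(p)}=\sum_\xi a_\xi \mathbf{W}_\xi$, a compressibility/Nash corrector $w^{(c)}$ built from $\mathbf{W}_\xi^{(c)}$ and derivatives of $a_\xi$ so that $\dvgphin(w^{(p)}+w^{(c)})$ becomes of lower order, and a temporal corrector $w^{(t)}$ whose $\partial_t$ absorbs the resonant part of $\dvgphin(w^{(p)}\otimes w^{(p)})$ coming from $\partial_t \boldsymbol\psi_\xi^2$. Setting $v_{n+1} := v_\ell + w_{n+1}$ and plugging into \eqref{eq:random_NS-reynolds} at level $n+1$, the new stress $\mathring{R}_{n+1}$ is identified as a sum of \emph{linear}, \emph{oscillation}, \emph{corrector}, \emph{flow} and \emph{mollification} errors, each written as $\dvgphin(\cdot)+\nablaphin(\cdot)$ via a $\phi_{n+1}$-twisted inverse-divergence operator, with pressure pieces absorbed into $q_{n+1}$. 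The oscillation error $\dvgphin(w^{(p)}\otimes w^{(p)}+\mathring{R}_\ell)$ splits into a low-frequency part killed by \eqref{geometric equality} and $w^{(t)}$, plus a high-frequency remainder gaining a factor $\lambda_{n+1}^{-1}$ from inverse divergence. Parameters $r_\perp, \rpar, \mu$ are chosen as powers of $\lambda_{n+1}$ so that this error and all others fit inside $C_R\delta_{n+2}$. The difference bound \eqref{eq:difference} reduces to the $L^2$-normalization $\|\mathbf{W}_\xi\|_{L^2}\sim 1$ combined with $\|a_\xi\|_\infty\lesssim \delta_{n+1}^{1/2}$, while the $W^{1,1}$ smallness exploits the intermittent factor $r_\perp$.

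The hard part, as the author emphasizes, is the $L^1_x$ bound on $q_{n+1}$ in \eqref{est:integrab}. Because the inverse-divergence operator is unbounded on $L^1$, a naive reading of the oscillation error would fail to produce the sub-polynomial growth $\lambda_{n+1}^{1/1000}$. The remedy, announced in \autoref{ssec:oscillation}--\autoref{ssec:pressure_est}, is to isolate the explicit contribution of the form $|w^{(p)}|^2$ to the pressure (whose $L^1$ norm is controlled directly by $\sum_\xi \|a_\xi\|_\infty^2 \|\mathbf{W}_\xi\|_{L^2}^2$) and to treat the remainder through an auxiliary iteration that bypasses singular integrals on $L^1$. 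A parallel difficulty appears in the divergence estimates \eqref{est:div}, which must hold both in $H^{-1}_x$ and $L^1_x$ at level $\delta_{n+3}^3$; these follow by writing $\dvgphin w_{n+1}$ as a contraction of derivatives of $\phi_{n+1}$ (controlled by \autoref{lem:flow}) with the intermittent perturbation, again exploiting the $r_\perp$ factor. Once these two technical points are handled, the remaining estimates in \eqref{est:integrab} and \eqref{est:deriv} close by choosing $b\gg 1$ and $\beta\ll 1$ so that the geometric gain $\delta_{n+2}$ dominates the polynomial losses coming from derivatives of $a_\xi$ and $\phi_{n+1}$. Causality of the construction (values at time $t$ depending only on $s\le t$) is automatic from the one-sided mollification in \eqref{eq:def_phi_n} and of $v_n, q_n, \mathring{R}_n$, together with the fact that $w^{(c)}$ is algebraic and $w^{(t)}$ is obtained by time-integration from zero past.
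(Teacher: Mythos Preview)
Your overall architecture matches the paper's, but two of the technical mechanisms you describe are not the ones that actually close the scheme.

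First, the divergence estimate \eqref{est:div}. You write that it follows by ``writing $\dvgphin w_{n+1}$ as a contraction of derivatives of $\phi_{n+1}$ with the intermittent perturbation.'' In fact $w^{(p)}+w^{(c,2)}$ is \emph{exactly} $\phi_{n+1}$-divergence-free (it is a $\curlphin\curlphin$), and $w^{(t)}$ is a $\mathcal{P}^{\phi_{n+1}}$-projection; the perturbation contributes nothing to $\dvgphin v_{n+1}$. The obstruction is that $v_\ell$ itself is not $\phi_{n+1}$-divergence-free: $\dvgphien v_n$ is only small by induction, and passing from $\phi_n$ to $\phi_{n+1}$ introduces further errors. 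The paper therefore adds a second corrector $w^{(c,1)} := -(\mathcal{Q}^{\phi_n} v_n)\ast\chi_\ell$, absent from your outline, whose role is to cancel $(\dvgphien v_n)\ast\chi_\ell$. The residual $\dvgphin v_{n+1}$ is then decomposed into commutators of $\dvgphien$ with mollification (controlled by $\ell^{1/4}$) and into $(\dvgphin-\dvgphien)$ applied to controlled fields (\autoref{lem:G_holder}, giving $(n+1)\varsigma_n^{1/4}$). Without $w^{(c,1)}$ the divergence does not improve from level $n$ to level $n+1$.

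Second, the $L^1_x$ pressure bound. The offending term is not $|w^{(p)}|^2$ (that contributes $\rho$, harmless in $L^1$), but the piece $P$ defined by $\nabla P = \mu^{-1}\mathcal{Q}\,\Pi_{\neq 0}\sum_\xi \partial_t(a_\xi^2\theta_\xi^2\psi_\xi^2\xi)$, which appears when $\partial_t w^{(t)}$ is matched against the oscillation error after undoing the flow. There is no direct $L^1$ bound on $P$ because $\Delta^{-1}\dvg$ is unbounded on $L^1$. The paper's device (\autoref{ssec:pressure_est}) is to run, in parallel, a \emph{deterministic} Navier--Stokes--Reynolds iteration $(\tilde u_n,\tilde p_n,\mathring{\tilde R}_n)$ with the same amplitudes $a_\xi$ but no flow composition, arranged so that its pressure increment is exactly $P$. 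For this auxiliary system one has the genuine elliptic equation $\Delta\tilde p_n = \dvg\dvg(\mathring{\tilde R}_n - \tilde u_n\otimes\tilde u_n)$ (no extra $\partial_t v$ term, since $\dvg\tilde u_n=0$ exactly), and Calder\'on--Zygmund in $L^p$ for $p>1$ small yields $\|P\|_{L^1}\le\lambda_{n+1}^{1/1000}$. Your phrase ``auxiliary iteration'' gestures in the right direction, but the content you attach to it---isolating $|w^{(p)}|^2$---is the easy part, not the mechanism.

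Two smaller corrections: $w^{(t)}$ is defined algebraically via $\mathcal{P}^{\phi_{n+1}}$, not by time integration; and the mollification is in space--time at scale $\ell = \lambda_{n+1}^{-3\alpha/2}\lambda_n^{-100}$, not in time alone.
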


\begin{proof}[Proof of \autoref{thm:energy}]
Starting the iteration from $(v_0,q_0,\mathring{R}_0)=(0,0,0)$ it is easy to check that $\{v_n\}_{n \in \N}$ is a Cauchy sequence in $C_\mathfrak{t} H_x^\gamma \cap C_\mathfrak{t} W_x^{1+\gamma,1}$. Moreover, $\mathring{R}_n$ and $\dvgphien v_n$ converge to zero in $C_\mathfrak{t} L^1_x$ and thus the limit $v$ is indeed a solution of \eqref{eq:random_NS} on $[0,\mathfrak{t}]$. Then \eqref{est:energy} guarantees the solution has the desired energy profile.
Moreover, $v$ is progressively measurable as the limit of progressive processes.
Convergences on an arbitrary time interval $[0,\mathfrak{t}_L]$, $L>1$ has been previously discussed, and are omitted. We just mention the fact that on $[0,\mathfrak{t}_L]$ the iterative estimates gain a factor $L_n := L^{m^n}$ for some $m$ (see also \cite[Proposition 2.4]{HoLaPa22+}) and all the convergences (except for the energy profile) hold true as soon as $m<b$. Since $b$ will be taken large in what follows, simply taking $m:=b-1$ gives the desired result.  
Finally, if $e_1$, $e_2$ are two energy profiles and $e_1(t)=e_2(t)$ for every $t\in[0,T/2]$, then the two associated solutions coincide up to time $T/2$ by the last part of the proposition. 
\end{proof}
The remainder of this section is devoted to the proof of \autoref{prop:it}.

\subsection{Perturbation of the velocity}
\label{ssec:velocity_est}
The building blocks of the perturbation are the intermittent jets presented in \autoref{ssec:jets}.
First of all, we fix a choice for the parameters:
\begin{align*}
r_\perp := \lambda_{n+1}^{-6/7},
\quad
\rpar := \lambda_{n+1}^{-4/7},
\quad
\mu := \lambda_{n+1}^{9/7},
\end{align*}
as well as the mollification parameters
\begin{align*}
\ell := \lambda_{n+1}^{-\frac{3}{2} \alpha} \lambda_n^{-100},
\qquad
\varsigma_n = \frac{\lambda_n^{-1/100}}{(n+1)^4}.
\end{align*}
In the expression above $\alpha \in (0,1)$ is a sufficiently small parameter to be chosen later. We require that $\ell^{-1}$ is an integer power of $2$, which is true for instance if $\alpha b \in 2\N$.

Next, let $\chi_1 \in C^\infty_c([-1,1]^3 \times [0,1))$ be a standard mollifier, and denote $\chi_\ell$ the rescaled kernel $\chi_\ell(x,t) := \ell^{-4}\chi_1(x/\ell,t/\ell)$, and define
\begin{align*}
v_\ell \coloneqq v_n \ast \chi_\ell,
\quad
q_\ell \coloneqq q_n \ast \chi_\ell,
\quad
\mathring{R}_\ell \coloneqq \mathring{R}_n \ast \chi_\ell.
\end{align*}

The new velocity field $v_{n+1}$ will be constructed as a perturbation of $v_\ell$:
\begin{align*}
v_{n+1}
:=
v_\ell + w_{n+1},
\end{align*}
where $w_{n+1} := w^{(p)}_{n+1} + w^{(c)}_{n+1} + w^{(t)}_{n+1}$ is split in three different contributions described below.

\subsubsection{The principal perturbation $w^{(p)}_{n+1}$}

Following \cite{HoZhZh22b+}, we define the energy pumping term\footnote{For large times $t > \mathfrak{t}$ the energy pumping term $\gamma_n(t)$ must be defined differently, along the lines of \cite[Remark 3.2]{HoLaPa22+}. This is due to the fact that $|v_n(x,t)|^2$ may be too large for large values of $t$, resulting in possibly vanishing of $\rho$. This is ultimately the reason why we can impose the desired energy profile to solutions only up to time $\mathfrak{t}$.}
\begin{align*}
\gamma_n (t)
&:=
\frac{1}{3(2\pi)^3} \left( e(t)(1-\delta_{n+2}) - \int_{\T^3} |v_n (x,t)|^2 dx \right),
\quad
t \in (-\infty,\mathfrak{t} ],
\end{align*}
and denote $\gamma_\ell := \gamma_n \ast \chi_\ell$ and $\rho := 2 \sqrt{\ell^2 + |\mathring{R}_\ell|^2} + \gamma_\ell$.
For every $\xi \in \Lambda$ let us introduce the amplitude functions
\begin{align} \label{eq:amplitude}
\mathbf{a}_\xi 
:=
\rho^{1/2} \gamma_{\xi} \left( \mathrm{Id} - \frac{\mathring{R}_\ell}{\rho} \right),
\qquad
a_\xi 
:=
\mathbf{a}_\xi \circ \phi_{n+1}^{-1},
\end{align}
and the principal part of the perturbation
\begin{align} \label{eq:def_wp}
w^{(p)}_{n+1} 
:=
\sum_{\xi \in \Lambda} \mathbf{a}_{\xi} \mathbf{W}_{\xi}
:= 
\left( \sum_{\xi \in \Lambda} a_{\xi} W_{\xi} \right) \circ \phi_{n+1},
\end{align}
where we use the bold symbols to indicate precomposition with the flow $\phi_{n+1}$. 
By \eqref{geometric equality} and the fact that $\phi_{n+1}$ is measure preserving it holds
\begin{align} \label{eq:wp otimes wp}
w^{(p)}_{n+1} \otimes w^{(p)}_{n+1} + \mathring{R}_\ell - \rho Id
&=
\sum_{\xi \in \Lambda} \mathbf{a}_{\xi}^2\, \Pi_{\neq 0} (\mathbf{W}_{\xi} \otimes \mathbf{W}_{\xi}) 
\\ \nonumber
&=
\left( \sum_{\xi \in \Lambda} a_{\xi}^2\, \Pi_{\neq 0} (W_{\xi} \otimes W_{\xi}) \right) \circ \phi_{n+1}.
\end{align}

\subsubsection{The compressibility corrector $w^{(c)}_{n+1}$} \label{ssec:w_c}

Denote $\mathcal{P}$ the classical Leray projector on zero-average, divergence free velocity fields and $\mathcal{Q} = Id - \mathcal{P}$ its orthogonal.
Recall from \cite{HoLaPa22+} the operators $\mathcal{P}^{\phi_n}, \mathcal{Q}^{\phi_n}$, $n \in \N$, acting on a given $v \in C^\infty(\T^3,\R^3)$ as
\begin{align*}
\mathcal{P}^{\phi_n} v \coloneqq
[\mathcal{P}(v \circ \phi_n^{-1})] \circ \phi_n,
\quad
\mathcal{Q}^{\phi_n} v \coloneqq
[\mathcal{Q}(v \circ \phi_n^{-1})] \circ \phi_n.
\end{align*}

The compressibility corrector is made of two different contributions, namely
\begin{align*}
w^{(c)}_{n+1} := w^{(c,1)}_{n+1}+w^{(c,2)}_{n+1},
\end{align*}
where
\begin{align} \label{eq:def_wc1}
w^{(c,1)}_{n+1} 
\coloneqq 
-(\mathcal{Q}^{\phi_n}v_n) \ast \chi_\ell,
\end{align}
is as in \cite{HoLaPa22+} and is needed to reduce the size of $\dvgphin v_\ell$, whereas
\begin{align} \label{eq:def_wc2}
w^{(c,2)}_{n+1}
&:=
\sum_{\xi \in \Lambda}
\curlphin(\nablaphin \mathbf{a}_{\xi} \times \mathbf{V}_{\xi}) + \nablaphin \mathbf{a}_{\xi} \times \curlphin \mathbf{V}_{\xi} + \mathbf{a}_{\xi} \mathbf{W}^{(c)}_{\xi}
\\
&:= \nonumber
\left( \sum_{\xi \in \Lambda}
\curl(\nabla a_{\xi} \times V_{\xi}) + \nabla a_{\xi} \times \curl V_{\xi} + a_{\xi} W^{(c)}_{\xi} \right) \circ \phi_{n+1},
\end{align}
serves to compensate for the divergence of the principal corrector since
\begin{align*}
w^{(p)}_{n+1}+w^{(c,2)}_{n+1}
&=
\sum_{\xi \in \Lambda} \curlphin \curlphin (\mathbf{a}_{\xi} \mathbf{V}_{\xi} )
\\
&=
\left( \sum_{\xi \in \Lambda} \curl \curl (a_{\xi} V_{\xi} )  \right) \circ \phi_{n+1}
\end{align*}
and thus $\dvgphin (w^{(p)}_{n+1}+w^{(c,2)}_{n+1}) = 0$.

\subsubsection{The temporal corrector $w^{(t)}$}
Finally, the temporal corrector $w^{(t)}_{n+1}$ is defined as 
\begin{align} \label{eq:def_wt}
w^{(t)}_{n+1}
&:=
- \frac{1}{\mu} \mathcal{P}^{\phi_{n+1}}
\Pi_{\neq 0}
\sum_{\xi \in \Lambda} 
\mathbf{a}_{\xi}^2\, \boldsymbol{\theta}_{\xi}^2\, \boldsymbol{\psi}_{\xi}^2\, \xi 
\\
&:= \nonumber
- \frac{1}{\mu} \left( \mathcal{P}
\Pi_{\neq 0}
\sum_{\xi \in \Lambda} 
a_{\xi}^2 \theta_{\xi}^2 \psi_{\xi}^2 \xi \right) \circ \phi_{n+1}.
\end{align}

By definition it holds
\begin{align} \label{eq:dt_wt}
\partial_t w^{(t)}_{n+1}
&=
- \frac{1}{\mu} \left( \mathcal{P}
\Pi_{\neq 0}
\sum_{\xi \in \Lambda} 
\partial_t  \left(a_{\xi}^2 \theta_{\xi}^2 \psi_{\xi}^2 \xi \right) \right) \circ \phi_{n+1}
\\ \nonumber
&\quad
- \frac{1}{\mu} \dot\phi_{n+1} \cdot \left[ \nabla 
\left( \mathcal{P}
\Pi_{\neq 0}
\sum_{\xi \in \Lambda} 
a_{\xi}^2 \theta_{\xi}^2 \psi_{\xi}^2 \xi  \right) \circ \phi_{n+1} \right]
\\ \nonumber
&=
- \frac{1}{\mu} \mathcal{P}^{\phi_{n+1}}
\left(\Pi_{\neq 0} \left(
\sum_{\xi \in \Lambda}  
\partial_t  \left(a_{\xi}^2 \theta_{\xi}^2 \psi_{\xi}^2 \xi \right) \right) \circ \phi_{n+1} \right)
\\ \nonumber
&\quad
- \frac{1}{\mu} \dot\phi_{n+1} \cdot \left[ \nabla 
\left( \mathcal{P}
\Pi_{\neq 0}
\sum_{\xi \in \Lambda} 
a_{\xi}^2 \theta_{\xi}^2 \psi_{\xi}^2 \xi  \right) \circ \phi_{n+1} \right]
\\ \nonumber
&=
- \frac{1}{\mu} \mathcal{P}^{\phi_{n+1}}
\Pi_{\neq 0} 
\sum_{\xi \in \Lambda}  
\partial_t  \left(\mathbf{a}_{\xi}^2 \boldsymbol\theta_{\xi}^2 \boldsymbol\psi_{\xi}^2 \xi \right)
\\ \nonumber
&\quad
+ \frac{1}{\mu} \mathcal{P}^{\phi_{n+1}}
\Pi_{\neq 0} \left( \dot\phi_{n+1} \cdot \left[ \sum_{\xi \in \Lambda}  
\nabla \left(a_{\xi}^2 \theta_{\xi}^2 \psi_{\xi}^2 \xi \right) \circ \phi_{n+1}
\right] \right)
\\ \nonumber
&\quad
- \frac{1}{\mu} \dot\phi_{n+1} \cdot \left[ \nabla 
\left( \mathcal{P}
\Pi_{\neq 0}
\sum_{\xi \in \Lambda} 
a_{\xi}^2 \theta_{\xi}^2 \psi_{\xi}^2 \xi  \right) \circ \phi_{n+1} \right].
\end{align}

\subsubsection{Estimates on the velocity}
The iterative estimates \eqref{est:deriv} on the velocity $v_{n+1}$ and \eqref{eq:difference} on the increment $w_{n+1} = v_{n+1}-v_n$ (actually, with $W_x^{1,1}$ replaced by $W_x^{1,p}$ for some $p>1$ sufficiently close to one) are easily obtained from the corresponding bounds in \cite[Section 3]{HoZhZh22b+} and the fact that the flow $\phi_{n+1}$ is measure preserving.

First, by (3.23), (3.24) and (3.28) in \cite{HoZhZh22b+} it holds $\| \rho \|_{C_{\mathfrak{t}}L^1_x} \leq C \delta_{n+1}$ for some universal constant $C$, as well as
\begin{align*}
\| a_\xi \|_{C_{\mathfrak{t},x}}
&=
\| \mathbf{a}_\xi \|_{C_{\mathfrak{t},x}}
\lesssim 
\| \rho \|_{C_{\mathfrak{t},x}}^{1/2} 
\lesssim 
\ell^{-2} \delta_{n+1}^{1/2},
\\
\| a_\xi \|_{C_{\mathfrak{t}}L_x^2}
&=
\| \mathbf{a}_\xi \|_{C_{\mathfrak{t}}L_x^2}
\lesssim 
\| \rho \|_{C_{\mathfrak{t}}L^1_x}^{1/2} 
\lesssim 
\delta_{n+1}^{1/2},
\end{align*}
whereas by (3.25) and (3.34) therein we have for every $N = 1,2,...10$ and $M=0,1$
\begin{align*}
\| \rho \|_{C^N_{\mathfrak{t},x}} 
&\lesssim 
\ell^{2-7N} \delta_{n+1},
\\
\| a_\xi \|_{C^M_{\mathfrak{t}}C^N_x} 
&\lesssim
\varsigma_{n+1}^{-M}
\| \mathbf{a}_\xi \|_{C^{M+N}_{\mathfrak{t},x}}
\lesssim
\varsigma_{n+1}^{-M} \ell^{-8-7(M+N)} \delta_{n+1}^{1/2}.
\end{align*}

By the previous inequalities, together with \cite[Lemma 7.4]{BuVi19b}, the fact that $\phi_{n+1}$ is measure preserving, and $W_\xi = \mathbf{W}_\xi \circ \phi_{n+1}^{-1}$ is $1/r_\perp \lambda$ periodic in its space variable, one has
\begin{align*}
\| w^{(p)}_{n+1} \|_{C_{\mathfrak{t}}L_x^2}
=
\| w^{(p)}_{n+1} \circ \phi_{n+1}^{-1}\|_{C_{\mathfrak{t}}L_x^2}
&\leq \frac{C_v}{2}
\delta_{n+1}^{1/2}
,
\end{align*}
where the constant $C_v$ is universal.

For $p \in (1,\infty)$, the bounds (3.43), (3.44), (3.45) read as
\begin{align*}
\| w^{(p)}_{n+1} \|_{C_{\mathfrak{t}}L_x^p}
&\lesssim
\delta_{n+1}^{1/2}
\ell^{-8} r_\perp^{2/p-1} \rpar^{1/p-1/2},
\\
\| w^{(c,2)}_{n+1} \|_{C_{\mathfrak{t}}L_x^p}
&\lesssim
\delta_{n+1}^{1/2}
\ell^{-22} r_\perp^{2/p} \rpar^{1/p-3/2},
\\
\| w^{(t)}_{n+1} \|_{C_{\mathfrak{t}}L_x^p}
&\lesssim
\delta_{n+1}
\ell^{-16} r_\perp^{2/p-1} \rpar^{1/p-2} \lambda_{n+1}^{-1},
\end{align*}
Moreover, arguing as in \cite[Lemma 4.9]{HoLaPa22+} and invoking iterative assumption \eqref{est:div} we get
\begin{align*}
\| w^{(c,1)}_{n+1} \|_{C_{\mathfrak{t}}L_x^2}
&\lesssim
\| \dvgphien v_n \|_{C_{\mathfrak{t}}H_x^{-1}}
\lesssim
\delta_{n+3}^3.
\end{align*}
In particular, all the bounds above with $p=2$ guarantee that the iterative assumption on $\| v_{n+1} - v_n \|_{C_\mathfrak{t}L_x^2}$ holds true, up to choosing the parameter $a$ large enough so to absorb all the implicit constants in the previous inequalities.

We also need to bound the $W_x^{1,p}$ norm of the increment for some $p>1$. It holds (see (3.52) and (3.51) in \cite{HoZhZh22b+})
\begin{align*}
\| w^{(p)}_{n+1} + w^{(c,2)}_{n+1} \|_{C_{\mathfrak{t}}W_x^{1,p}}
&\lesssim
\sum_{\xi \in \Lambda}
\| \curlphin \curlphin (\mathbf{a}_\xi \mathbf{V}_\xi) \|_{C_{\mathfrak{t}}W_x^{1,p}}
\\
&\lesssim
r_\perp^{2/p-1} \rpar^{1/p-1/2} \ell^{-8} \lambda_{n+1},
\\
\| w^{(t)}_{n+1} \|_{C_{\mathfrak{t}}W_x^{1,p}}
&\lesssim
\frac{1}{\mu} \sum_{\xi \in \Lambda}
\| \mathbf{a}_\xi^2 \,\boldsymbol\theta_\xi^2 \,\boldsymbol\psi_\xi^2 \|_{C_{\mathfrak{t}}W_x^{1,p}}
\\
&\lesssim
r_\perp^{2/p-2} \rpar^{1/p-1} \ell^{-16} \lambda_{n+1}^{-2/7}.
\end{align*}
In addition, by interpolation and assuming $p \in (1,2)$ it holds
\begin{align*}
\| w^{(c,1)}_{n+1} \|_{C_{\mathfrak{t}}W_x^{1,p}}
&\lesssim
\| \dvgphien v_n \|_{C_{\mathfrak{t}}L_x^p}
\\
&\lesssim
\| \dvgphien v_n \|_{C_{\mathfrak{t}}L^1_x}^{2-p}
\| v_n \|_{C^1_{\mathfrak{t},x}}^{p-1}
\\
&\lesssim
\delta_{n+3}^{3(2-p)} \lambda_{n}^{6(p-1)},
\end{align*}
and thus $\|v_{n+1}-v_n\|_{C_{\mathfrak{t}}W_x^{1,p}} \lesssim \delta_{n+3}^2$ at least when $\beta$ is sufficiently small, $p < \min \{ 101/100, 1+ \beta b^3 /100 \}$, and $a$ is taken large enough. The iterative bound \eqref{est:deriv} on $\|v_{n+1}\|_{C_{\mathfrak{t}}W_x^{1,1}}$ descends immediately.

Finally, for the $C^1_{\mathfrak{t},x}$ and $C^1_{\mathfrak{t}}C_x^3$ norms of the incremental velocity we have the very loose estimates (cf. (3.48), (3.49) and (3.50) in \cite{HoZhZh22b+})
\begin{align*}
\| w^{(p)}_{n+1} + w^{(c,2)}_{n+1} \|_{C^1_{\mathfrak{t},x}}
&\lesssim
\sum_{\xi \in \Lambda}
\| \curlphin \curlphin (\mathbf{a}_\xi \mathbf{V}_\xi) \|_{C^1_{\mathfrak{t},x}}
\\
&\lesssim
\sum_{\xi \in \Lambda}
\varsigma_{n+1}^{-2} \| \mathbf{a}_\xi \|_{C^1_{\mathfrak{t}}C^2_x} 
(\| \mathbf{V}_\xi \|_{C^1_{\mathfrak{t}}C_x^2} 
+ 
\| \mathbf{V}_\xi \|_{C_{\mathfrak{t}}C_x^3})
\\
&\lesssim
\sum_{\xi \in \Lambda}
\varsigma_{n+1}^{-3} \ell^{-29} 
(\| \mathbf{V}_\xi \|_{C^1_{\mathfrak{t}}W_x^{4,2}} 
+ 
\| \mathbf{V}_\xi \|_{C_{\mathfrak{t}}W_x^{5,2}})
\\
&\lesssim
\varsigma_{n+1}^{-4} \ell^{-29} r_\perp \rpar^{-1}\lambda_{n+1}^3 \mu
\lesssim \lambda_{n+1}^5,
\\
\| w^{(p)}_{n+1} + w^{(c,2)}_{n+1} \|_{C^1_{\mathfrak{t}}C_x^3}
&\lesssim
\varsigma_{n+1}^{-3} \lambda_{n+1}^2
\sum_{\xi \in \Lambda}
\| \curlphin \curlphin (\mathbf{a}_\xi \mathbf{V}_\xi) \|_{C_{\mathfrak{t}}C_x^3}
\\
&\lesssim
\varsigma_{n+1}^{-3} \lambda_{n+1}^2
\sum_{\xi \in \Lambda}
\| \mathbf{a}_\xi \|_{C_{\mathfrak{t}}C^5_x} \| \mathbf{V}_\xi \|_{C_{\mathfrak{t}}C_x^5}
\\
&\lesssim
\varsigma_{n+1}^{-3} \lambda_{n+1}^2
\sum_{\xi \in \Lambda}  \ell^{-43} \| \mathbf{V}_\xi \|_{C_{\mathfrak{t}}W_x^{7,2}}
\\
&\lesssim
\varsigma_{n+1}^{-3} \ell^{-43} \lambda_{n+1}^7 
\lesssim \lambda_{n+1}^{8},
\end{align*}
and
\begin{align*}
\| w^{(t)}_{n+1} \|_{C^1_{\mathfrak{t},x}}
&\lesssim
\frac{\lambda_{n+1}^2}{\varsigma_{n+1}\mu} \sum_{\xi \in \Lambda}
\| \mathbf{a}_\xi^2 \,\boldsymbol\theta_\xi^2 \,\boldsymbol\psi_\xi^2 \|_{C_{\mathfrak{t},x}}
+
\frac{1}{\mu} \sum_{\xi \in \Lambda}
\| \mathbf{a}_\xi^2 \,\boldsymbol\theta_\xi^2 \,\boldsymbol\psi_\xi^2 \|_{C_{\mathfrak{t}}C_x^1}
\\
&\lesssim
\frac{\lambda_{n+1}^2}{\varsigma_{n+1}\mu} \sum_{\xi \in \Lambda}
\| \mathbf{a}_\xi\|_{C_{\mathfrak{t},x}}^2 
\| \boldsymbol\theta_\xi^2 \,\boldsymbol\psi_\xi^2 \|_{C_{\mathfrak{t},W_x^{2,2}}}
+
\frac{1}{\mu} \sum_{\xi \in \Lambda}
\| \mathbf{a}_\xi\|_{C_{\mathfrak{t}}C_x^1}^2 
\|\boldsymbol\theta_\xi^2 \,\boldsymbol\psi_\xi^2 \|_{C_{\mathfrak{t}}W_x^{3,2}}
\\
&\lesssim
\varsigma_{n+1}^{-1} \mu^{-1} \ell^{-30} r_\perp^{-1} \rpar^{-1/2}\lambda_{n+1}^6
\lesssim
\lambda_{n+1}^6,
\\
\| w^{(t)}_{n+1} \|_{C^1_{\mathfrak{t}}C_x^3}
&\lesssim
\frac{\lambda_{n+1}^2}{\varsigma_{n+1}\mu} \sum_{\xi \in \Lambda}
\| \mathbf{a}_\xi^2 \,\boldsymbol\theta_\xi^2 \,\boldsymbol\psi_\xi^2 \|_{C_{\mathfrak{t}}C_x^3}
\\
&\lesssim
\frac{\lambda_{n+1}^2}{\varsigma_{n+1}\mu} \sum_{\xi \in \Lambda}
\| \mathbf{a}_\xi^2 \|_{C_{\mathfrak{t}}C_x^3}
\|\boldsymbol\theta_\xi^2 \,\boldsymbol\psi_\xi^2 \|_{C_{\mathfrak{t}}W_x^{5,2}}
\\
&\lesssim
\varsigma_{n+1}^{-1}\mu^{-1}
\ell^{-58}
r_\perp^{-1} \rpar^{-1/2} \lambda_{n+1}^{12}
\lesssim
\lambda_{n+1}^{12}.
\end{align*}

For $w^{(c,1)}_{n+1}$ we have instead
\begin{align*}
\|w^{(c,1)}_{n+1}\|_{C^3_{\mathfrak{t},x}}
&\lesssim
\ell^{-5} 
\|\mathcal{Q}^{\phi_n}v_n\|_{C_{\mathfrak{t}}L_x^2}
\lesssim
\ell^{-5} 
\|\dvgphien v_n \|_{C_{\mathfrak{t}}H_x^{-1}}
\lesssim
\ell^{-5} \delta_{n+3}^3.
\end{align*}
Notice that these estimates guarantee that \eqref{est:deriv} are satisfied provided $\beta$ (resp. $a$) is taken once more sufficiently small (resp. large).

\subsubsection{Estimates on the divergence}
To verify the iterative bounds \eqref{est:div} it is necessary to reformulate Lemma 4.7 of \cite{HoLaPa22+} so to estimate the difference $(\dvgphin - \dvgphien)\,v$ in the $L_x^p$ and $L_x^2$ scales. The product map $(f,g) \mapsto fg$ is continuous from $L_x^p \times C_x \to L_x^p$ and $H_x^{-1} \times C_x^2 \to H_x^{-1}$, therefore we have:
\begin{lem} \label{lem:G_holder} 
For every $p \in [1,\infty]$ there exists a constant $C$ with the following property. For every $n \in \N$, given any smooth vector field $v \in C^\infty(\mathbb{T}^3,\R^3)$ on the torus and denoting $G \coloneqq \left(\dvgphin-\dvgphien\right) v$, almost surely it holds for every $t \leq \mathfrak{t}$
\begin{align*}
\|G(t)\|_{H_x^{-1}}
&\leq 
C (n+1)\varsigma_n^{1/4}
\| v \|_{L_x^2},
\\
\|G(t)\|_{L_x^p}
&\leq 
C (n+1)\varsigma_n^{1/4}
\| v \|_{W_x^{1,p}}.
\end{align*}
\end{lem}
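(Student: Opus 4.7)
The plan is to reduce both inequalities to pointwise bounds on the difference of inverse-Jacobian matrices. A direct chain-rule computation starting from $\dvgphi v = [\dvg(v \circ \phi^{-1})] \circ \phi$ yields
\begin{equation*}
\dvgphi v = \mathrm{tr}\bigl( (Dv) \cdot M^\phi \bigr),
\qquad
M^\phi := (D\phi^{-1}) \circ \phi = (D\phi)^{-1},
\end{equation*}
so that $G = \mathrm{tr}\bigl((Dv) N\bigr)$ with $N := M^{\phi_{n+1}} - M^{\phi_n}$. The entire argument then reduces to controlling $N$ in $C^0_x$ and in $C^1_x$.

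Since $\phi_n$ and $\phi_{n+1}$ are measure preserving, $\det D\phi_n = \det D\phi_{n+1} = 1$, and Cramer's rule writes every entry of $M^{\phi_n}$ and $M^{\phi_{n+1}}$ as a polynomial of degree two in the entries of $D\phi_n$, $D\phi_{n+1}$. Combining the uniform bound $\|\phi_n\|_{C^{1/4}_\mathfrak{t} C_x^\kappa} \leq C$ with the difference bound $\|\phi_{n+1} - \phi_n\|_{C_\mathfrak{t} C_x^{k+1}} \leq C(n+1)\varsigma_n^{1/4}$ from \autoref{lem:flow} (applied with $\beta = 0$), I would conclude
\begin{equation*}
\|N\|_{C_\mathfrak{t} C_x^k} \leq C_k (n+1)\varsigma_n^{1/4}, \qquad k \in \N.
\end{equation*}

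The $L^p_x$ estimate is then immediate from the product inequality $\|fg\|_{L^p_x} \leq \|f\|_{L^p_x}\|g\|_{C^0_x}$ applied entrywise with $f = \partial_j v_k$ and $g = N_{jk}$. For the $H^{-1}_x$ estimate, I would use the Leibniz rule to write $G = \partial_j(v_k N_{jk}) - v_k\, \partial_j N_{jk}$: the first term satisfies $\|\partial_j(v_k N_{jk})\|_{H^{-1}_x} \leq \|v_k N_{jk}\|_{L^2_x} \leq \|N\|_{C^0_x}\|v\|_{L^2_x}$, while the second, lying in $L^2_x \hookrightarrow H^{-1}_x$, is bounded by $\|N\|_{C^1_x}\|v\|_{L^2_x}$; summing yields the claimed $C(n+1)\varsigma_n^{1/4}\|v\|_{L^2_x}$. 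I do not foresee any genuine obstacle beyond the observation that measure preservation makes $M^\phi$ a polynomial (hence smooth) function of $D\phi$, so that $C^k_x$ control on $N$ follows directly from the $C^{k+1}_x$ flow-difference bound in \autoref{lem:flow}; the rest is mechanical.
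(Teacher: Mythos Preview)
Your proposal is correct and follows essentially the same route as the paper: the paper also writes $G$ as a product of $Dv$ with the flow-difference matrix and then invokes the product continuity $L_x^p \times C_x^0 \to L_x^p$ and $H_x^{-1} \times C_x^2 \to H_x^{-1}$, the latter being exactly what your integration-by-parts argument proves by hand (yours in fact only needs $C_x^1$ on $N$, but the extra derivative is available from \autoref{lem:flow} anyway). The Cramer's rule observation exploiting $\det D\phi_n = 1$ is the clean way to get smooth dependence of $M^\phi$ on $D\phi$ and is implicit in the reference \cite{HoLaPa22+} the paper cites.
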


Then, arguing as in \cite{HoLaPa22+}, rewrite
\begin{align} \label{eq:divergence} 
\dvgphin v_{n+1}
&= 
\dvgphin v_\ell - \left( \dvgphin v_n \right) \ast \chi_\ell
\\ 
&\quad+ \nonumber
\left( \dvgphin v_n \right) \ast \chi_\ell
-
\left(\dvgphien v_n\right)   \ast \chi_\ell
\\
&\quad+ \nonumber
\left(\dvgphien Q^{\phi_n}v_n\right) \ast \chi_\ell
-
\dvgphien \left( (Q^{\phi_n}v_n) \ast \chi_\ell \right)
\\
&\quad+  \nonumber
\dvgphien \left( (Q^{\phi_n}v_n) \ast \chi_\ell \right)
-
\dvgphin \left( (Q^{\phi_n}v_n) \ast \chi_\ell \right).
\end{align}

By \cite[Lemma 4.4]{HoLaPa22+} and iterative assumptions  it holds
\begin{align*}
\left\|\dvgphin v_\ell - \left( \dvgphin v_n \right) \ast \chi_\ell \right\|_{C_{\mathfrak{t},x}}
&\lesssim
\ell^{1/4} \|v_n\|_{C^1_{\mathfrak{t},x}}
\lesssim
\lambda_{n}^{-10},
\\
\left\|
\left(\dvgphien Q^{\phi_n}v_n\right) \ast \chi_\ell
-
\dvgphien \left( (Q^{\phi_n}v_n) \ast \chi_\ell \right) \right\|_{C_{\mathfrak{t},x}}
&\lesssim
\ell^{1/4} \|Q^{\phi_n} v_n\|_{C_{\mathfrak{t}}C^1_x}
\\
&\lesssim
\ell^{1/4} \| v_n\|_{C^1_{\mathfrak{t},x}}
\lesssim
\lambda_{n}^{-10}.
\end{align*}
A fortiori, these quantities satisfy the same bounds in 
$C_{\mathfrak{t}}H_x^{-1}$ and $C_{\mathfrak{t}}L_x^p$.

The other two terms on the right-hand-side of \eqref{eq:divergence} are controlled with \autoref{lem:G_holder} and the bounds on $\|v_n\|_{C_{\mathfrak{t}}L_x^2},\|v_n\|_{C_{\mathfrak{t}}W_x^{1,p}}$, and using $(n+1) \varsigma_{n}^{1/4} = \lambda_n^{-1/400} \leq \frac{C_v}{4} \delta_{n+3}^3$ for $\beta$ sufficiently small.

\subsubsection{Estimate on the energy}
The control on the energy is quite standard.
Rewrite
\begin{align*}
|v_{n+1}|^2
=
|v_\ell|^2
+
|w^{(p)}_{n+1}|^2
+
|w^{(c)}_{n+1}+w^{(t)}_{n+1}|^2
+
2 v_\ell \cdot w^{(p)}_{n+1}
+
2 (v_\ell + w^{(p)}_{n+1})\cdot(w^{(c)}_{n+1}+w^{(t)}_{n+1}).
\end{align*}

Since $\| w^{(c)}_{n+1}+w^{(t)}_{n+1} \|_{C_{\mathfrak{t}}L_x^2} \lesssim \delta_{n+3}^2$ it holds
\begin{align*}
\left| \int_{\T^3} (v_\ell + w^{(p)}_{n+1})\cdot(w^{(c)}_{n+1}+w^{(t)}_{n+1}) dx \right|
&\lesssim 
\| v_\ell + w^{(p)}_{n+1} \|_{C_{\mathfrak{t}}L_x^2}
\| w^{(c)}_{n+1}+w^{(t)}_{n+1} \|_{C_{\mathfrak{t}}L_x^2}
\lesssim
\delta_{n+3}^2.
\end{align*}

The term $v_\ell \cdot w^{(p)}_{n+1}$ is controlled in the following way:
\begin{align*}
\int_{\T^3} |v_\ell \cdot w^{(p)}_{n+1}| dx
&\lesssim
\| v_\ell \|_{C_t L_x^\infty} \|w^{(p)}_{n+1}\|_{C_tL^1_x}
\lesssim
\ell^{-10} r_\perp \rpar^{1/2}
\lesssim
\delta_{n+3}^2. 
\end{align*}

By \eqref{eq:wp otimes wp} and using that $\mathrm{tr}(\mathring{R}_\ell)=0$ and $\phi_{n+1}$ is measure preserving, we have
\begin{align*}
\int_{\T^3}
\left( |w^{(p)}_{n+1}|^2 
- 3\rho \right) dx
&=
\sum_{\xi \in \Lambda}
\int_{\T^3}
\mathbf{a}_\xi^2 \,\Pi_{\neq 0}|\mathbf{W}_\xi|^2 dx
=
\sum_{\xi \in \Lambda}
\int_{\T^3}
{a}_\xi^2 \,\Pi_{\neq 0}|{W}_\xi|^2 dx
\\
&=
\sum_{\xi \in \Lambda}
\int_{\T^3}
{a}_\xi^2 \,\Pi_{\geq r_\perp \lambda_{n+1}/2}|{W}_\xi|^2 dx
\\
&\lesssim
\| a_\xi^2 \|_{C_\mathfrak{t}C_x^9} (\lambda_{n+1}r_\perp)^{-9} \| |W_\xi|^2 \|_{C_\mathfrak{t}L_x^2}
\\
&\lesssim
\ell^{7-16\cdot 9} (\lambda_{n+1}r_\perp)^{-9} r_\perp^{-1} \rpar^{-1/2}
\lesssim
\lambda_{n+1}^{-1/10}. 
\end{align*}
The last line is justified by the fact that ${W}_\xi$ is $r_\perp \lambda_{n+1}$ periodic.
On the other hand,
\begin{align*}
\int_{\T^3} 3\rho dx 
=
\gamma_\ell - \gamma_n
+
6 \int_{\T^3} \sqrt{\ell^2 + |\mathring{R}_\ell|^2} dx
+
e(t)(1-\delta_{n+2}) - \int_{\T^3} |v_n(x,t)|^2 dx.
\end{align*}
Putting all together, we can rewrite the energy error at level $n+1$ as
\begin{align*}
e(t)(1-\delta_{n+2}) - \int_{\T^3} |v_{n+1}|^2 dx
&=
\gamma_n-\gamma_\ell
+
6\int_{\T^3} \sqrt{\ell^2 + |\mathring{R}_\ell|^2} dx
+
\int_{\T^3} |v_{\ell}|^2 - |v_{n}|^2 dx
+
r_{n+1},
\end{align*}
where by the previous lines $r_{n+1}$ is a reminder satisfying $|r_{n+1}| \leq \frac{\overline{e}}{8} \delta_{n+2}$ (up to choosing $a$ sufficiently large).
Then \eqref{est:energy} is recovered noticing that all the other terms on the right-hand-side of the equation above are smaller than $\overline{e}\ell + 4C_v^2\lambda_n^{6} \ell + 6 C_R \delta_{n+2} \leq \frac{\overline{e}}{8} \delta_{n+2}$.

\subsection{The oscillatory term and the new pressure $q_{n+1}$}
\label{ssec:oscillation}
The temporal corrector serves to reduce the oscillatory term
\begin{align*}
\partial_t w^{(t)}_{n+1}  
+
\Pi_{\neq 0}\sum_{\xi \in \Lambda} 
\mathbf{a}_{\xi}^2 \dvgphin (\mathbf{W}_{\xi} \otimes \mathbf{W}_{\xi}).
\end{align*}

For classical intermittent jets the key identity was 
\begin{align*}
\dvg(W_{\xi} \otimes W_{\xi})
&=
2 (W_{\xi} \cdot \nabla \psi_{\xi}) \theta_{\xi} \xi
=
\frac{1}{\mu} \theta_{\xi}^2 \partial_t \psi_{\xi}^2 \xi
=
\frac{1}{\mu}\partial_t  \left( \theta_{\xi}^2 \psi_{\xi}^2 \xi\right),
\end{align*}
which holds true because $\xi \cdot \nabla \psi = \mu^{-1} \partial_t \psi$.
However, for us it holds instead
\begin{align} \label{eq:div_WxW}
\dvgphin(\mathbf{W}_{\xi} \otimes \mathbf{W}_{\xi}) 
&=
\dvg(W_{\xi} \otimes W_{\xi}) \circ \, \phi_{n+1}
\\
&= \nonumber
\frac{1}{\mu} \partial_t  \left( \theta_{\xi}^2 \psi_{\xi}^2 \xi\right) \circ \phi_{n+1}
\\
&= \nonumber
\frac{1}{\mu} \partial_t  \left( \boldsymbol{\theta}_{\xi}^2 \boldsymbol{\psi}_{\xi}^2 \xi \right)
-
\frac{1}{\mu} \dot{\phi}_{n+1} \cdot \left[ \nabla \left( \theta_{\xi}^2 \psi_{\xi}^2 \xi\right) \circ \phi_{n+1} \right].
\end{align}

Therefore by \eqref{eq:dt_wt} and \eqref{eq:div_WxW} we have 
\begin{align} \label{eq:oscillation_error_decomp}
\partial_t &w^{(t)}_{n+1}  
+
\Pi_{\neq 0}\sum_{\xi \in \Lambda} 
\mathbf{a}_{\xi}^2 \, \dvgphin (\mathbf{W}_{\xi} \otimes \mathbf{W}_{\xi})
\\
&= \nonumber
\frac{1}{\mu} \mathcal{Q}^{\phi_{n+1}}
\Pi_{\neq 0} 
\sum_{\xi \in \Lambda}  
\partial_t  \left(\mathbf{a}_{\xi}^2 \boldsymbol\theta_{\xi}^2 \boldsymbol\psi_{\xi}^2 \xi \right)
\\
&\quad \nonumber
+ \frac{1}{\mu} \mathcal{P}^{\phi_{n+1}}
\Pi_{\neq 0} \,\dot\phi_{n+1} \cdot \left[ \left(
\sum_{\xi \in \Lambda}  
\nabla \left(a_{\xi}^2 \theta_{\xi}^2 \psi_{\xi}^2 \xi \right) \right) \circ \phi_{n+1}
\right]
\\
&\quad \nonumber
- \frac{1}{\mu} \dot\phi_{n+1} \cdot \left[ \nabla 
\left( \mathcal{P}
\Pi_{\neq 0}
\sum_{\xi \in \Lambda} 
a_{\xi}^2 \theta_{\xi}^2 \psi_{\xi}^2 \xi  \right) \circ \phi_{n+1} \right]
\\
&\quad   \nonumber
-
\frac{1}{\mu}\Pi_{\neq 0}
\sum_{\xi \in \Lambda} 
(\partial_t  \mathbf{a}_{\xi}^2) \boldsymbol\theta_{\xi}^2 \boldsymbol\psi_{\xi}^2 \xi
\\
&\quad \nonumber
-
\frac{1}{\mu} \Pi_{\neq 0}
\sum_{\xi \in \Lambda} 
\mathbf{a}_{\xi}^2\, 
\dot{\phi}_{n+1} \cdot \left[\nabla \left( \theta_{\xi}^2 \psi_{\xi}^2 \xi\right) \circ \phi_{n+1} \right].
\end{align}

The lines second to five on the right-hand-side will be shown to be small in $L_x^p$ in \autoref{ssec:R&P}. The key fact is that in these terms there is no time derivative acting on $\psi_\xi$ or $\boldsymbol{\psi}_\xi$, and thus the factor $\mu^{-1}$ in front dominates by our choice of parameters.

On the other hand, the first term on the right-hand-side is the gradient of a pressure (in the sense that equals $\nabla^{\phi_{n+1}} \tilde{P}$ for some $\tilde{P}$) but it needs to be manipulated further, nonetheless. 
Indeed, we can not prove our iterative estimate on the $\| q_n \|_{C_\mathfrak{t}L^1_x}$ by estimating the increment $\tilde{P}$ in $C_\mathfrak{t}L^1_x$.

We rewrite
\begin{align} \label{eq:oscillation_error_decomp_II}
\frac{1}{\mu} \mathcal{Q}^{\phi_{n+1}}
\Pi_{\neq 0} 
\sum_{\xi \in \Lambda}  
\partial_t  \left(\mathbf{a}_{\xi}^2 \boldsymbol\theta_{\xi}^2 \boldsymbol\psi_{\xi}^2 \xi \right)
&=
\frac{1}{\mu} \mathcal{Q}
\left(
\Pi_{\neq 0} 
\sum_{\xi \in \Lambda}  
[\partial_t  \left(\mathbf{a}_{\xi}^2 \boldsymbol\theta_{\xi}^2 \boldsymbol\psi_{\xi}^2 \xi \right)] \circ \phi_{n+1}^{-1}
\right) \circ \phi_{n+1}
\\
&= \nonumber
\frac{1}{\mu} \mathcal{Q}
\left(
\Pi_{\neq 0} 
\sum_{\xi \in \Lambda}  
\partial_t  \left({a}_{\xi}^2 \theta_{\xi}^2 \psi_{\xi}^2 \xi \right)
\right) \circ \phi_{n+1}
\\
&\quad+ \nonumber
\frac{1}{\mu} \mathcal{Q}
\left(
\Pi_{\neq 0}  (\dot \phi_{n+1} \circ \phi_{n+1}^{-1}) \cdot
 \left[ \sum_{\xi \in \Lambda} \nabla \left( 
{a}_{\xi}^2 \theta_{\xi}^2 \psi_{\xi}^2 \xi \right) \right]
\right) \circ \phi_{n+1}
\\
&=: \nonumber
(\nabla P) \circ \phi_{n+1}
+
\frac{1}{\mu} \mathcal{Q}^{\phi_{n+1}}
\Pi_{\neq 0} \left(\dot\phi_{n+1} \cdot 
\left[ \sum_{\xi \in \Lambda}\nabla \left( 
{a}_{\xi}^2 \theta_{\xi}^2 \psi_{\xi}^2 \xi \right)\circ \phi_{n+1} \right] \right)
\\
&= \nonumber
\nablaphin (P \circ \phi_{n+1})
+
\frac{1}{\mu} \mathcal{Q}^{\phi_{n+1}}
\Pi_{\neq 0} \left( 
\dot\phi_{n+1} \cdot  
\left[ \sum_{\xi \in \Lambda} \nabla \left( 
{a}_{\xi}^2 \theta_{\xi}^2 \psi_{\xi}^2 \xi \right)\circ \phi_{n+1} \right] \right),
\end{align}
where the pressure increment $P$ is implicitly defined in the second-to-last line. The other term does not have any time derivative and therefore can be easily absorbed into the Reynolds stress $\mathring{R}_{n+1}$, in particular it does not need to be incorporated in the new pressure $q_{n+1}$.

As a consequence, recalling the bounds $\|\rho\|_{C^2_{\mathfrak{t},x}} \lesssim \ell^{-12}$ and $\|\rho\|_{C_{\mathfrak{t}}L^1_x} \lesssim \delta_{n+1}$ and defining the new pressure as
\begin{align} \label{eq:new_pressure}
q_{n+1}
:=
q_\ell 
-
\rho
-
P \circ \phi_{n+1},
\end{align}
the iterative estimate \eqref{est:deriv} on the derivative of the pressure is satisfied since
\begin{align*}
\| P \circ \phi_{n+1} \|_{C_\mathfrak{t}C_x^2}
&\lesssim
\left\| \frac{1}{\mu} \Pi_{\neq 0} 
\sum_{\xi \in \Lambda}  
\partial_t  ({a}_{\xi}^2 \theta_{\xi}^2 \psi_{\xi}^2 \xi) \right\|_{C_\mathfrak{t}C_x^2}
\\
&\lesssim 
\frac{\lambda_{n+1}^2}{\varsigma_{n+1}\mu}
\sum_{\xi \in \Lambda}  
\left\| {a}_{\xi}^2 \theta_{\xi}^2 \psi_{\xi}^2  \right\|_{C_\mathfrak{t}C_x^2}
\\
&\lesssim 
\frac{\lambda_{n+1}^2}{\varsigma_{n+1}\mu}
\sum_{\xi \in \Lambda}  
\| {a}_{\xi}\|_{C_\mathfrak{t}C_x^2}^2 
\| \theta_{\xi}^2 \psi_{\xi}^2  \|_{C_\mathfrak{t}W_x^{4,2}}
\\
&\lesssim 
\varsigma_{n+1}^{-1} \mu^{-1} \ell^{-44} r_\perp^{-1} \rpar^{-1/2} \lambda_{n+1}^{10}
\lesssim 
\lambda_{n+1}^{10},
\end{align*}
whereas the integrability bound \eqref{est:integrab} is satisfied as long as we can prove $\| P \circ \phi_{n+1}\|_{L^1_x} = \|P\|_{L^1_x} \leq \lambda_{n+1}^{1/1000}$ for every $n \in \N$, assuming at least $C_q \geq 2$ and $\beta$ sufficiently small.
We postpone the verification of the latter inequality to \autoref{ssec:pressure_est}.

\subsection{The Reynolds stress $\mathring{R}_{n+1}$} \label{ssec:R&P}

Let us recall from \cite{DLS13} the operator $\mathcal{R}$ that acts as left inverse of the operator $\mbox{div}$.
Namely, for every $v \in C^\infty(\T^3,\R^3)$ let $\mathcal{R}v$ be the matrix-valued function defined in \cite[Definition 4.2]{DLS13}, so that $\mathcal{R}v$ takes values in the space of symmetric trace-free matrices and $\dvg\mathcal{R}v = v - \frac{1}{(2\pi)^3}\int_{\T^3}v$.
Then we have
\begin{align*}
\mathcal{R}^{\phi_{n+1}} v \coloneqq
[\mathcal{R}(v \circ \phi_{n+1}^{-1})] \circ \phi_{n+1},
\quad
\dvgphin (\mathcal{R}^{\phi_{n+1}} v) = v - \frac{1}{(2\pi)^3}\int_{\T^3}v.
\end{align*} 

Then, we shall choose the new Reynolds stress $\mathring{R}_{n+1}$ such that 
\begin{align} \label{eq:R&q}
\mathring{R}_{n+1} 
\coloneqq
\mathcal{R}^{\phi_{n+1}}
\left( \partial_t v_{n+1} + \dvgphin(v_{n+1} \otimes v_{n+1}) + \nablaphin q_{n+1} - \Dphin v_{n+1}\right).
\end{align}
It is easy to check that the term inside the parentheses has zero space average by construction, and thus \eqref{eq:R&q} gives a solution to the Navier-Stokes-Reynolds system at level $n+1$.
In addition, it can be conveniently decomposed as 
\begin{align*}
\partial_t v_{n+1} 
&+ 
\dvgphin(v_{n+1}\otimes v_{n+1})
+
\nablaphin q_{n+1}
-
\Dphin v_{n+1}
\\
&= \nonumber
\underbrace{\left[\partial_t w^{(p)}_{n+1} + \partial_t w^{(c)}_{n+1} + \dvgphin(w_{n+1} \otimes v_\ell + v_\ell \otimes w_{n+1} ) - \Dphin w_{n+1} \right]}_{=\,linear\, error}
\\
&\quad+ \nonumber
\underbrace{\left[\dvgphin\left(w^{(p)}_{n+1} \otimes w^{(p)}_{n+1} +\mathring{R}_\ell\right) + \partial_t w^{(t)}_{n+1} + \nablaphin(q_{n+1}-q_\ell)\right]}_{=\,oscillation\, error}
\\
&\quad+ \nonumber
\underbrace{\left[ \dvgphin(v_\ell \otimes v_\ell - (v_n \otimes v_n) \ast \chi_\ell)\right]}_{=\,mollification\, error\,I}
\\
&\quad+ \nonumber
\underbrace{\left[ \dvgphien\left( \left( v_n \otimes v_n\right) \ast \chi_\ell  + q_\ell Id - \mathring{R}_\ell  \right)
-
\left( \dvgphien \left( v_n \otimes v_n + q_n Id - \mathring{R}_n \right) \right) \ast \chi_\ell
 \right]}_{=\,mollification\, error\, II}
 \\
&\quad+ \nonumber
\underbrace{\left[ \chi_\ell\ast\Dphien v_n - \Dphien v_\ell \right]}_{=\,mollification\, error \,III}
\\
&\quad+ \nonumber
\underbrace{\left[(\Dphien - \Dphin)\, v_\ell +\left(\dvgphin-\dvgphien\right)
((v_n \otimes v_n) \ast \chi_\ell - \mathring{R}_\ell + q_\ell Id) \right]}_{=\,flow\, error}
\\
&\quad+ \nonumber
\underbrace{\left[\dvgphin( w^{(p)}_{n+1} \otimes (w^{(c)}_{n+1}+w^{(t)}_{n+1}) + (w^{(c)}_{n+1}+w^{(t)}_{n+1}) \otimes w_{n+1}) \right]}_{=\,corrector\, error}.
\end{align*}
In this way, since the operator $\mathcal{R}^{\phi_{n+1}}$ is linear, we are able to separately control the different contributions to the new Reynolds stress $\mathring{R}_{n+1}$.
Thus, the estimate \eqref{est:integrab} on the Reynolds stress in $L^1_x$ (actually in $L_x^p$ for some $p>1$) is obtained in a standard way, making use of the antidivergence $\mathcal{R}^{\phi_{n+1}}$.
The ``new" error terms coming from the composition with the flow are easily controlled assuming $\varsigma_{n+1}^{-1} \ll \lambda_{n+1} \ll \mu$. 
The key is that in those errors we don't have time derivatives of the intermittent jets.

\subsubsection{Linear error}
The terms
\begin{align*}
\mathcal{R}^{\phi_{n+1}} \dvgphin(w_{n+1} \otimes v_\ell + v_\ell \otimes w_{n+1})
\end{align*}
and
\begin{align*}
\mathcal{R}^{\phi_{n+1}} \Dphin w_{n+1}
=
\mathcal{R}^{\phi_{n+1}} \dvgphin \nablaphin w_{n+1}
\end{align*}
are controlled using that $\mathcal{R}^{\phi_{n+1}} \dvgphin :L_x^p \to L_x^p$ is bounded, Young convolution inequality and
\begin{align*}
\|v_\ell \otimes w_{n+1}\|_{L_x^p} + \|\nablaphin w_{n+1}\|_{L_x^p} 
&\lesssim 
\|v_\ell\|_{L_x^\infty}\|w_{n+1}-w^{(c,1)}_{n+1}\|_{L_x^p} 
\\
&+
\|v_\ell\|_{L_x^{2p'}}\|w^{(c,1)}_{n+1}\|_{L_x^{2}}
+
\|w_{n+1}\|_{W_x^{1,p}}
\\
&\lesssim 
\|v_\ell\|_{L_x^\infty}\|w_{n+1}-w^{(c,1)}_{n+1}\|_{L_x^p} 
\\
&+
\ell^{-4 \frac{p'-1}{p'}}
\|v_n\|_{L_x^2}\|w^{(c,1)}_{n+1}\|_{L_x^2} 
+
\|w_{n+1}\|_{W_x^{1,p}}
\lesssim
\delta_{n+3}^2.
\end{align*}
Here $p'$ is such that $1/2p' + 1/2 = 1/p$, in particular $p' \to 1^+$ as $p \to 1^+$, so that the factor $\ell^{-4 \frac{p'-1}{p'}}\delta_{n+3} \lesssim 1$ for $p$ sufficiently small (depending only on $\alpha$, $\beta$ and $b$ but not on $a$; in particular we can always increase the value of $a$ to absorb any implicit constant). 

As for the other term, recall
\begin{align*}
w^{(p)}_{n+1} + w^{(c,2)}_{n+1}
&= 
\curlphin \curlphin \mathbf{V}
=
\left( \curl \curl V \right) \circ \phi_{n+1},
\\
\partial_t w^{(p)}_{n+1} + \partial_t w^{(c,2)}_{n+1}
&=
\left( \curl \curl \partial_t V \right) \circ \phi_{n+1}
\\
&\quad+
\dot{\phi}_{n+1} \cdot \left[ 
\left(\nabla  \curl \curl V \right) \circ \phi_{n+1}
\right]
\\
&=
\curlphin \left( (\curl \partial_t V)  \circ \phi_{n+1}\right)
\\
&\quad+
\dot{\phi}_{n+1} \cdot \left[ 
\left(\nabla  \curl \curl V \right) \circ \phi_{n+1}
\right],
\end{align*}
and use that $\mathcal{R}^{\phi_{n+1}}\curlphin$ and $\mathcal{R}^{\phi_{n+1}}$ are bounded on $L_x^p$ plus the estimates (take $p$ close to one)
\begin{align*}
\| (\curl \partial_t V)  \circ \phi_{n+1} \|_{L_x^p}
&\lesssim
\| V \|_{C^1_{\mathfrak{t}} W_x^{1,p}}
\\
&\lesssim
r_\perp^{2/p} \rpar^{1/p-3/2} \mu 
\lesssim
\lambda_{n+1}^{-1/10},
\\
\| \dot{\phi}_{n+1} \cdot \left[ 
\left(\nabla  \curl \curl V \right) \circ \phi_{n+1}
\right] \|_{L_x^p}
&\lesssim
\varsigma_{n+1}^{-1} 
\| V \|_{C_{\mathfrak{t}} W_x^{3,p}}
\\
&\lesssim
\varsigma_{n+1}^{-1}
\lambda_{n+1} r_\perp^{2/p-1} \rpar^{1/p-1/2}
\lesssim
\lambda_{n+1}^{-1/10}.
\end{align*}

It only remains to control $\mathcal{R}^{\phi_{n+1}} \partial_t w^{(c,1)}_{n+1}$, which can be done as in \cite{HoLaPa22+}.
More precisely, one writes $\partial_t w^{(c,1)}_{n+1}$ as $- (\mathcal{Q}^{\phi_n} v_n) \ast \partial_t \chi^0_\ell$ (we use the zero-mean version of $\partial_t \chi_\ell$) and bounds (using Lemma C.7 and adapting Lemma C.6 therein with any $\delta>0$, $1+1/p=1/p_1 + 1/p_2$ and $p_2$ sufficiently close to $1$)
\begin{align*}
\| \mathcal{R}^{\phi_{n+1}} \partial_t w^{(c,1)}_{n+1} \|_{C_{\mathfrak{t}}L_x^p}
&\lesssim
\|(\mathcal{Q}^{\phi_n} v_n) \ast \partial_t \chi^0_\ell\|_{C_{\mathfrak{t}}W_x^{-1,p}}
\\
&\lesssim
\ell
\sup_{s \leq t}
\|(\mathcal{Q}^{\phi_n} v_n)(\cdot,t-s) \ast_{\T^3} \partial_t \chi^0_\ell (\cdot,s)\|_{C_{\mathfrak{t}}W_x^{-1,p}}
\\
&\lesssim
\ell
\sup_{s \leq t}
\|(\mathcal{Q}^{\phi_n} v_n)(\cdot,t-s) \ast_{\T^3} \partial_t \chi^0_\ell (\cdot,s)\|_{C_{\mathfrak{t}}B^{\delta-1}_{p,\infty}}
\\
&\lesssim
\ell
\sup_{s \leq t}
\|\mathcal{Q}^{\phi_n} v_n\|_{C_{\mathfrak{t}}L_x^{p_1}}
\| \partial_t \chi^0_\ell \|_{C_{\mathfrak{t}}B^{2\delta-1}_{p_2,\infty}}
\\
&\lesssim
\ell^{-3\delta} \delta_{n+3}^3
\lesssim 
\delta_{n+3}^2.
\end{align*}
In the lines above $B^{\alpha}_{p,q} := B^{\alpha}_{p,q}(\T^3)$, $\alpha \in \R$ and $p,q \in [1,\infty]$, denotes the Besov space on the three dimensional torus, cf. \cite{HoLaPa22+}.

\subsubsection{Oscillation error}
Recalling \eqref{eq:wp otimes wp}, it holds
\begin{align*}
\partial_t w^{(t)}_{n+1}  
&+
\dvgphin \left(
w^{(p)}_{n+1} \otimes w^{(p)}_{n+1} + \mathring{R}_\ell - \rho Id
\right)
\\
&=
\partial_t w^{(t)}_{n+1}
+
\dvgphin 
\left(
\sum_{\xi \in \Lambda} \mathbf{a}_{\xi}^2\, \Pi_{\neq 0} (\mathbf{W}_{\xi} \otimes \mathbf{W}_{\xi}) 
\right)
\\
&=
\partial_t w^{(t)}_{n+1}
+
\Pi_{\neq 0}\sum_{\xi \in \Lambda} 
\mathbf{a}_{\xi}^2 \dvgphin (\mathbf{W}_{\xi} \otimes \mathbf{W}_{\xi})
\\
&\quad+
\Pi_{\neq 0} 
\sum_{\xi \in \Lambda} 
\nablaphin \mathbf{a}_{\xi}^2 \cdot \Pi_{\neq 0}(\mathbf{W}_{\xi} \otimes \mathbf{W}_{\xi}).
\end{align*}

According to the same estimates as for $R_{osc}^{(x)}$ in \cite[page 22]{HoZhZh22b+} we can bound for $p$ sufficiently small (recall that $\mathbf{a}_\xi$ and $a_\xi$ enjoy the same bounds on space derivatives up to unimportant multiplicative constants)
\begin{align*}
\left\| \mathcal{R}^{\phi_{n+1}} \left( 
\Pi_{\neq 0} 
\sum_{\xi \in \Lambda} 
\nablaphin \mathbf{a}_{\xi}^2 \cdot \Pi_{\neq 0}(\mathbf{W}_{\xi} \otimes \mathbf{W}_{\xi})
\right) \right\|_{C_\mathfrak{t}L_x^p}
&=
\left\| \mathcal{R}\left( 
\sum_{\xi \in \Lambda} 
\nabla a_{\xi}^2  \cdot \Pi_{\neq 0} (W_{\xi} \otimes W_{\xi})
\right) \right\|_{C_\mathfrak{t}L_x^p}
\\
&=
\left\| \mathcal{R}\left( 
\sum_{\xi \in \Lambda} 
\nabla a_{\xi}^2  \cdot \Pi_{\geq r_\perp \lambda_{n+1}/2} (W_{\xi} \otimes W_{\xi})
\right) \right\|_{C_\mathfrak{t}L_x^p}
\\
&\lesssim
\ell^{-23} r_\perp^{2/p-3} \rpar^{1/p-1} \lambda_{n+1}^{-1}
\lesssim \lambda_{n+1}^{-1/10}.
\end{align*}

As for the other terms, we have already seen by \eqref{eq:oscillation_error_decomp} and \eqref{eq:oscillation_error_decomp_II} the decomposition 
\begin{align*}
\partial_t &w^{(t)}_{n+1}  
+
\Pi_{\neq 0}\sum_{\xi \in \Lambda} 
\mathbf{a}_{\xi}^2 \, \dvgphin (\mathbf{W}_{\xi} \otimes \mathbf{W}_{\xi})
\\
&=
\nablaphin (P \circ \phi_{n+1})
\\
&\quad+
\frac{1}{\mu} 
\Pi_{\neq 0} \, \dot\phi_{n+1} \cdot
 \left[ \sum_{\xi \in \Lambda} \nabla \left( 
{a}_{\xi}^2 \theta_{\xi}^2 \psi_{\xi}^2 \xi \right)\circ \phi_{n+1} \right]
\\
&\quad
- \frac{1}{\mu} \dot\phi_{n+1} \cdot \left[ \nabla 
\left( \mathcal{P}
\Pi_{\neq 0}
\sum_{\xi \in \Lambda} 
a_{\xi}^2 \theta_{\xi}^2 \psi_{\xi}^2 \xi  \right) \circ \phi_{n+1} \right]
\\
&\quad  
-
\frac{1}{\mu}\Pi_{\neq 0}
\sum_{\xi \in \Lambda} 
(\partial_t  \mathbf{a}_{\xi}^2) \boldsymbol\theta_{\xi}^2 \boldsymbol\psi_{\xi}^2 \xi
\\
&\quad
-
\frac{1}{\mu} \Pi_{\neq 0}
\sum_{\xi \in \Lambda} 
\mathbf{a}_{\xi}^2\, 
\dot{\phi}_{n+1} \cdot \left[\nabla \left( \theta_{\xi}^2 \psi_{\xi}^2 \xi\right) \circ \phi_{n+1} \right],
\end{align*}
where we can bound
\begin{align*}
\frac{1}{\mu} \left\| 
\Pi_{\neq 0}
\sum_{\xi \in \Lambda} 
(\partial_t  \mathbf{a}_{\xi}^2) \boldsymbol\theta_{\xi}^2 \boldsymbol\psi_{\xi}^2 \xi
\right\|_{C_\mathfrak{t}L_x^p}
&\lesssim
\frac{1}{\mu} \sum_{\xi \in \Lambda} 
\| a_{\xi} \|_{C_{\mathfrak{t},x}} 
\| a_{\xi} \|_{C^1_{\mathfrak{t},x}} 
\| \theta_{\xi}^2 \psi_{\xi}^2 \|_{C_\mathfrak{t}L_x^p}
\\
&\lesssim
\mu^{-1} \varsigma_{n+1}^{-1} \ell^{-19} \rpar^{1/p-1} r_\perp^{2/p-2}
\lesssim
\lambda_{n+1}^{-1},
\end{align*}
and all the other terms except $\nablaphin (P \circ \phi_{n+1})$ with
\begin{align*}
\frac{\varsigma_{n+1}^{-1}}{\mu}
\sum_{\xi \in \Lambda}
\| a_{\xi}^2 \theta_{\xi}^2 \psi_{\xi}^2 \|_{C_\mathfrak{t}W_x^{1,p}}
&\lesssim
\frac{\varsigma_{n+1}^{-1}}{\mu}
\| a_{\xi} \|_{C_{t,x}} \| a_{\xi} \|_{C^1_{\mathfrak{t},x}} \| \theta_{\xi} \|_{L_x^{2p}}^2 \| \psi_{\xi} \|_{L_x^{2p}}^2
\\
&\quad+
\frac{\varsigma_{n+1}^{-1}}{\mu}
\| a_{\xi} \|_{C_{t,x}}^2 \| \theta_{\xi} \|_{L_x^{2p}} \| \nabla \theta_{\xi} \|_{L_x^{2p}} \|\psi_{\xi}\|_{L_x^{2p}}^2
\\
&\quad+
\frac{\varsigma_{n+1}^{-1}}{\mu}
\| a_{\xi} \|_{C_{t,x}}^2 \| \theta_{\xi} \|_{L_x^{2p}}^2 \| \nabla \psi_{\xi} \|_{L_x^{2p}} \|\psi_{\xi}\|_{L_x^{2p}}
\\
&\lesssim
\varsigma_{n+1}^{-1} \mu^{-1}
\ell^{-16} r_\perp^{2/p-2} \rpar^{1/p-1}
( \ell^{-7} + \lambda_{n+1} + r_\perp \rpar^{-1} \lambda_{n+1} )
\lesssim
\lambda_{n+1}^{-1/10}.
\end{align*}

\subsubsection{Mollification error}
In order to control the mollification error, use 
\begin{align*}
\| \mathcal{R}^{\phi_{n+1}} \dvgphin
(v_\ell \otimes v_\ell - (v_n \otimes v_n) \ast \chi_\ell)
\|_{C_{\mathfrak{t}}L_x^p}
&\lesssim
\|v_\ell \otimes v_\ell - (v_n \otimes v_n) \ast \chi_\ell
\|_{C_{\mathfrak{t}}L_x^p}
\\
&\lesssim
\ell \lambda_{n}^{6}
\lesssim
\delta_{n+3}^2,
\end{align*}
and by \cite[Lemma 4.4]{HoLaPa22+} with $G =  v_n \otimes v_n + q_n Id - \mathring{R}_n$
\begin{align*}
\left\| \dvgphien\left( G \ast \chi_\ell \right)
-
\left( \dvgphien G \right) \ast \chi_\ell
\right\|_{C_{\mathfrak{t},x}}
\lesssim
\ell^{1/4} \|G\|_{C_{\mathfrak{t}}C_x^1}
\lesssim
\ell^{1/4} \lambda_n^{20}
\lesssim
\delta_{n+3}^2.
\end{align*}

As for the term $\chi_\ell \ast \Dphien v_n - \Dphien v_\ell$, rewrite
\begin{align*}
\chi_\ell \ast \Dphien v_n - \Dphien v_\ell
&=
\chi_\ell \ast \dvgphien \nablaphien v_n 
-
\dvgphien (\chi_\ell \ast \nablaphien v_n )
\\
&\quad+
\dvgphien (\chi_\ell \ast \nablaphien v_n )
-  \dvgphien \nablaphien v_\ell,
\end{align*}
and use the same lemma with $G=\nablaphien v_n$ (or $G=v_n$ and replacing $\dvgphien$ with $\nablaphien$)
\begin{align*}
\left\|
\chi_\ell \ast \dvgphien \nablaphien v_n 
-
\dvgphien (\chi_\ell \ast \nablaphien v_n )
\right\|_{C_{\mathfrak{t},x}}
&\lesssim
\ell^{1/4} \|v_n\|_{C_{\mathfrak{t}}C^2_x}
\\
&\lesssim
\ell^{1/4} \lambda_n^{12} 
\lesssim
\delta_{n+3}^2,
\\
\left\| \mathcal{R}^{\phi_{n+1}} \left(
\dvgphien (\chi_\ell \ast \nablaphien v_n )
-  \dvgphien \nablaphien v_\ell \right)
\right\|_{C_{\mathfrak{t}}L_x^p}
&\lesssim
\| \chi_\ell \ast \nablaphien v_n - \nablaphien v_\ell
\|_{C_{\mathfrak{t}}L_x^p}
\\
&\lesssim
\ell^{1/4} \lambda_n^{12}
\lesssim
\delta_{n+3}^2.
\end{align*}

\subsubsection{Flow error}

Use \autoref{lem:G_holder} with $G = \nablaphien v_\ell$ or $G=(v_n \otimes v_n) \ast \chi_\ell - \mathring{R}_\ell + q_\ell Id$ (or again with $G=v_\ell$ and replacing $(\dvgphin-\dvgphien)$ with $(\nablaphin-\nablaphien)$ ) to get
\begin{align*}
\left\| (\Dphien- \Dphin) v_\ell \right\|_{C_{\mathfrak{t}}W_x^{-1,p}}
&\leq
\left\| (\dvgphien- \dvg^{\phi_{n+1}}) \nablaphien v_\ell \right\|_{C_{\mathfrak{t}}W_x^{-1,p}}
\\
&+
\left\| \dvg^{\phi_{n+1}}(\nablaphien- \nablaphin) v_\ell \right\|_{C_{\mathfrak{t}}W_x^{-1,p}}
\\
&\lesssim
(n+1) \varsigma_n^{1/4} \|v_\ell\|_{C_{\mathfrak{t}}W_x^{1,p}}
\lesssim
\delta_{n+3}^{2},
\end{align*}
and for some $\delta>0$ depending only on $p$ and such that $\delta \to 0^+$ as $p\to 1^+$ 
\begin{align*}
&\left\| 
\left(\dvgphin-\dvgphien\right)
((v_n \otimes v_n) \ast \chi_\ell - \mathring{R}_\ell + q_\ell Id)
\right\|_{C_{\mathfrak{t}}W_x^{-1,p}}
\\
&\qquad\lesssim
(n+1) \varsigma_n^{1/4}
\| (v_n \otimes v_n) \ast \chi_\ell - \mathring{R}_\ell + q_\ell Id \|_{C_{\mathfrak{t}}L_x^p}
\\
&\qquad\lesssim
(n+1) \varsigma_n^{1/4} \ell^{-\delta} \left(
\| v_n \|_{C_{\mathfrak{t}}L_x^2} +
\|\mathring{R}_n\|_{C_{\mathfrak{t}}L^1_x} + 
\| q_n \|_{C_{\mathfrak{t}}L^1_x} \right)
\\
&\qquad\lesssim
(n+1) \varsigma_n^{1/4} \ell^{-\delta} \lambda_n^{1/1000}
\lesssim
\delta_{n+3}^2.
\end{align*}
We point out that the previous inequalities hold true at least choosing $p$ sufficiently close to $1$, but again not depending on $a$, so that implicit constants can be absorbed taking $a$ large enough.

\subsubsection{Correction error}
It is sufficient to control, recalling previous bounds
\begin{align*}
\| w^{(p)}_{n+1} \otimes (w^{(c)}_{n+1}+w^{(t)}_{n+1}) \|_{C_{\mathfrak{t}}L_x^p}
&\lesssim
\| w^{(p)}_{n+1} \|_{C_{\mathfrak{t}}L_x^{2p}}
\| w^{(c)}_{n+1}+w^{(t)}_{n+1} \|_{C_{\mathfrak{t}}L_x^{2p}}
\\
&\lesssim
\ell^{-p'} (\delta_{n+3}^3 + \ell^{-22}\lambda_{n+1}^{-1/10} ) 
\lesssim
\delta_{n+3}^2,
\\ 
\|  (w^{(c)}_{n+1}+w^{(t)}_{n+1}) \otimes w_{n+1} \|_{C_{\mathfrak{t}}L_x^p}
&\lesssim
\| w_{n+1} \|_{C_{\mathfrak{t}}L_x^{2p}}
\| w^{(c)}_{n+1}+w^{(t)}_{n+1} \|_{C_{\mathfrak{t}}L_x^{2p}}
\\
&\lesssim
\ell^{-p'} (\delta_{n+3}^3 + \ell^{-22}\lambda_{n+1}^{-1/10} ) 
\lesssim
\delta_{n+3}^2.
\end{align*}
Putting all together, the proof of the bound \eqref{est:integrab} on $\|\mathring{R}_{n+1}\|_{C_\mathfrak{t}L_x^1}$ is proved up to noticing
\begin{align*}
\|\mathring{R}_{n+1}\|_{C_\mathfrak{t}L_x^1}
\lesssim
\|\mathring{R}_{n+1}\|_{C_\mathfrak{t}L_x^p}
\lesssim
\delta_{n+3}^2 + \lambda_{n+1}^{-1/10}
\leq
C_R \delta_{n+3}
\end{align*}
up to suitable choice of parameters.

\subsubsection{Estimate on $\| \mathring{R}_{n+1} \|_{C_\mathfrak{t}C_x^1}$}

This comes easily from the Navier-Stokes-Reynolds equation itself and the bounds on the derivatives of $v_{n+1}$, $q_{n+1}$ already proved in \autoref{ssec:velocity_est} and \autoref{ssec:oscillation}. Indeed, by \eqref{eq:R&q}
\begin{align*}
\mathring{R}_{n+1} 
\coloneqq
\mathcal{R}^{\phi_{n+1}}
\left( \partial_t v_{n+1} + \dvgphin(v_{n+1} \otimes v_{n+1}) + \nablaphin q_{n+1} - \Dphin v_{n+1}\right),
\end{align*}
and thus for every $\delta>0$ it holds
\begin{align*}
\| \mathring{R}_{n+1} \|_{C_\mathfrak{t}C^{1+\delta}_x}
&\lesssim
\| \partial_t v_{n+1} + \dvgphin(v_{n+1} \otimes v_{n+1}) + \nablaphin q_{n+1} - \Dphin v_{n+1} \|_{C_\mathfrak{t}C^{\delta}_x}
\\
&\lesssim
\| v_{n+1} \|_{C^1_{\mathfrak{t},x}}
\| v_{n+1} \|_{C^1_\mathfrak{t}C^3_x}
+
\| q_{n+1} \|_{C_\mathfrak{t}C^2_x}
\lesssim
\lambda_{n+1}^{18}.
\end{align*}

\subsection{Estimate on the pressure} \label{ssec:pressure_est}
As already explained in \autoref{ssec:oscillation}, in order to prove the $L_x^1$ estimate on the pressure we only need to bound $P$.

Let us therefore consider the auxiliary Navier-Stokes-Reynolds system 
\begin{align*}
\partial_t \tilde{u}_n 
+
\dvg (\tilde{u}_n \otimes \tilde{u}_n)
+
\nabla \tilde{p}_n
-
\Delta \tilde{u}_n
=
\dvg \mathring{\tilde{R}}_n,
\end{align*}
with $\dvg \tilde{u}_n = 0$ and $\int_{\T^3} \tilde{u}_n = 0$.
Notice that the differential operators above are not composed with the flow $\phi_{n}$.
Then, starting from the triple $(\tilde{u}_0,\tilde{p}_0,\mathring{\tilde{R}}_0) = 0$ and iterating the system $(\tilde{u}_n,\tilde{p}_n,\mathring{\tilde{R}}_n)$ \emph{simultaneously} with the iteration for $({v}_n,{p}_n,\mathring{{R}}_n)$ according to
\begin{align*}
\tilde{u}_{n+1}
:=
\tilde{u}_\ell + \tilde{w}_{n+1}
:=
\tilde{u}_\ell + \tilde{w}^{(p)}_{n+1} + \tilde{w}^{(c)}_{n+1}+ \tilde{w}^{(t)}_{n+1},
\end{align*}
where for every $n \in \N$ we define
\begin{align*}
\tilde{w}^{(p)}_{n+1} 
&:=
\sum_{\xi \in \Lambda} a_{\xi} W_{\xi},
\\
\tilde{w}^{(c)}_{n+1}
&:=  \sum_{\xi \in \Lambda}
\curl(\nabla a_{\xi} \times V_{\xi}) + \nabla a_{\xi} \times \curl V_{\xi} + a_{\xi} W^{(c)}_{\xi},
\\
\tilde{w}^{(t)}_{n+1}
&:=
- \frac{1}{\mu} \mathcal{P}
\Pi_{\neq 0}
\sum_{\xi \in \Lambda} 
a_{\xi}^2 \theta_{\xi}^2 \psi_{\xi}^2 \xi,
\end{align*}
and
\begin{align*}
\tilde{p}_{n+1} 
&:=
\tilde{p}_\ell
-
P,
\\
\mathring{\tilde{R}}_{n+1}
&:=
\mathcal{R}
\left(
\partial_t \tilde{v}_{n+1} 
+
\dvg (\tilde{u}_{n+1} \otimes \tilde{u}_{n+1})
+
\nabla \tilde{p}_{n+1} 
-
\Delta \tilde{u}_{n+1}
\right),
\end{align*}
one can prove the bounds 
\begin{align*}
\| \tilde{u}_{n+1}-\tilde{u}_n \|_{C_\mathfrak{t}L_x^2}
\leq
C_u \delta_{n+1}^{1/2},
\quad
\| \tilde{u}_n \|_{C^1_{\mathfrak{t},x}}
\leq
C_u \lambda_n^{6},
\\
\| \mathring{\tilde{R}}_n \|_{C_\mathfrak{t}L_x^1} \leq
C_R 3^n,
\quad
\| \mathring{\tilde{R}}_n \|_{C_\mathfrak{t}C^1_x} \leq
C_R \lambda_n^{20},
\end{align*}
for every $n \in \N$.
Indeed, we have used $\rho$ and $a_\xi$ exactly as before, and so they are constructed from $\mathring{{R}}_\ell$ and not $\mathring{\tilde{R}}_\ell$; in particular, the bounds on $\tilde{u}_n$ and $\mathring{\tilde{R}}_n$ in $C^1$ as well as the bound on the increment $\tilde{u}_{n+1}-\tilde{u}_n$ are readily proved.

The only slight difference is the estimate on $\mathring{\tilde{R}}_n$ in $L^1_x$, for which we argue as follows.
First, one has the following decomposition of $\dvg \mathring{\tilde{R}}_{n+1}$
\begin{align*}
\partial_t \tilde{u}_{n+1} 
&+ 
\dvg (\tilde{u}_{n+1}\otimes \tilde{u}_{n+1})
+
\nabla \tilde{p}_{n+1}
-
\Delta \tilde{u}_{n+1}
\\
&= \nonumber
\underbrace{\left[\partial_t \tilde{w}^{(p)}_{n+1} + \partial_t \tilde{w}^{(c)}_{n+1} + \dvg (\tilde{w}_{n+1} \otimes \tilde{u}_\ell + \tilde{u}_\ell \otimes \tilde{w}_{n+1} ) - \Delta \tilde{w}_{n+1} \right]}_{=\,linear\, error}
\\
&\quad+ \nonumber
\underbrace{\left[\dvg \left(\tilde{w}^{(p)}_{n+1} \otimes \tilde{w}^{(p)}_{n+1} + \mathring{R}_\ell \circ \phi_{n+1}^{-1} - \rho \circ \phi_{n+1}^{-1} Id \right) + \partial_t \tilde{w}^{(t)}_{n+1} + \nabla P \right]}_{=\,oscillation\, error}
\\
&\quad+ \nonumber
\underbrace{\left[ \dvg (\tilde{u}_\ell \otimes \tilde{u}_\ell - (\tilde{u}_n \otimes \tilde{u}_n) \ast \chi_\ell)\right]}_{=\,mollification\, error}
\\
&\quad+ \nonumber
\underbrace{\left[\dvg ( \tilde{w}^{(p)}_{n+1} \otimes (\tilde{w}^{(c)}_{n+1}+\tilde{w}^{(t)}_{n+1}) + (\tilde{w}^{(c)}_{n+1}+\tilde{w}^{(t)}_{n+1}) \otimes \tilde{w}_{n+1}) \right]}_{=\,corrector\, error}
\\
&\quad+ \nonumber
\underbrace{\left[\dvg \left( \rho \circ \phi_{n+1}^{-1} Id -
\mathring{R}_\ell \circ \phi_{n+1}^{-1} + \mathring{\tilde{R}}_\ell  \right) \right]}_{=\,remainder\, error}.
\end{align*} 
Notice that
\begin{align*} 
w^{(p)}_{n+1} \otimes w^{(p)}_{n+1} 
&=
\sum_{\xi \in \Lambda} a_{\xi}^2\, \Pi_{\neq 0} (W_{\xi} \otimes W_{\xi})
+ 
\rho \circ \phi_{n+1}^{-1} Id
-
\mathring{R}_\ell \circ \phi_{n+1}^{-1},
\end{align*}
and hence the fast-fast interaction of the perturbation is not compensated by $\mathring{\tilde{R}}_\ell$ but rather by the ``old" terms $\rho \circ \phi_{n+1}^{-1} Id + \mathring{{R}}_\ell \circ \phi_{n+1}^{-1}$, for which however we have bounds in $L_x^p$.
All the other terms are controlled as usual, and therefore a remainder error proportional to 
\begin{align*}
\|\rho \circ \phi_{n+1}^{-1} Id -
\mathring{R}_\ell \circ \phi_{n+1}^{-1} + \mathring{\tilde{R}}_\ell\|_{L_x^p}
\leq 2 C_R 3^n
\end{align*} 
in the $C_\mathfrak{t}L^1_x$ norm of $\mathring{\tilde{R}}_{n+1}$ accumulates along the iteration.
Notice that the auxiliary Reynolds stress $\mathring{\tilde{R}}_{n+1}$ is not small, but this is not important for our purposes since we only use it to recover estimates on $P$.
Indeed, we know 
\begin{align*}
\partial_t \tilde{u}_n 
+
\dvg (\tilde{u}_n \otimes \tilde{u}_n)
+
\nabla \tilde{p}_n
-
\Delta \tilde{u}_n
&=
\dvg \mathring{\tilde{R}}_n,
\\
\partial_t \tilde{u}_{n+1} 
+
\dvg (\tilde{u}_{n+1} \otimes \tilde{u}_{n+1})
+
\nabla \tilde{p}_{n+1} 
-
\Delta \tilde{u}_{n+1}
&= 
\dvg \mathring{\tilde{R}}_{n+1},
\end{align*}
and $ \dvg \tilde{u}_n = \dvg \tilde{u}_{n+1}  = 0$ (the perturbation $\tilde{w}_{n+1}$ is divergence free).
Thus
\begin{align} \label{eq:Delta_pressure_q}
\Delta \tilde{p}_n 
&=
\dvg\dvg (\mathring{\tilde{R}}_n- \tilde{u}_n \otimes \tilde{u}_n),
\\ \label{eq:Delta_pressure_q+1}
\Delta \tilde{p}_{n+1} 
&=
\dvg\dvg (\mathring{\tilde{R}}_{n+1}- \tilde{u}_{n+1} \otimes \tilde{u}_{n+1}),
\end{align}
and Schauder estimates imply for every $p>1$ sufficiently small (independent of $n$)
\begin{align*}
\| \tilde{p}_n \|_{L_x^1}
\lesssim
\| \tilde{p}_n \|_{L_x^p}
&\lesssim
\| \mathring{\tilde{R}}_n \|_{L_x^p}
+
\| \tilde{u}_n \|_{L_x^{2p}}^2
\\
&\lesssim
\| \mathring{\tilde{R}}_n \|_{L_x^p}
+
\sum_{k=1}^{n} \| \tilde{u}_{k} - \tilde{u}_{k-1} \|_{L_x^{2p}}^2
\leq \tfrac12 \lambda_n^{1/1000},
\end{align*}
and similarly for $n+1$. 
In addition, we know that $P$ is related to $\tilde{p}_n,\tilde{p}_{n+1}$ via the formula
\begin{align*}
\tilde{p}_{n+1} = \tilde{p}_\ell  - P, 
\end{align*}
so that we have
\begin{align*}
\| P \|_{L_x^1} 
\leq 
\| \tilde{p}_{n+1} \|_{L_x^1}
+
\| \tilde{p}_n \|_{L_x^1}
\leq 
\lambda_{n+1}^{1/1000},
\end{align*}
concluding the proof.

\begin{rmk}
We critically need \eqref{eq:Delta_pressure_q} and \eqref{eq:Delta_pressure_q+1} for the ``deterministic" pressures $\tilde{p}_n$, $\tilde{p}_{n+1}$.
Indeed, it does not seem possible to recover directly good estimates on $P$, and thus any estimate on the pressure $q_{n+1}$ must be obtained from the equation itself.
However, in the stochastic case this is not feasible: indeed
\begin{align*}
\Dphin q_{n+1}
=
\dvgphin \dvgphin (\mathring{R}_{n+1} - v_{n+1} \otimes v_{n+1} + \nablaphin v_{n+1} )
-
\dvgphin (\partial_t v_{n+1}),
\end{align*} 
where the additional terms come from the fact that $\dvgphin v_{n+1} \neq 0$ in general, and moreover $\dvgphin$ and $\partial_t$ do not commute.
But the last term in the previous inequality prevents us from obtaining any good bound on $\| q_{n+1} \|_{L_x^1}$, since we can only estimate $\|\partial_t v_{n+1} \|_{W_x^{-1,1}}$ with $\|v_{n+1}\|_{C^1_{\mathfrak{t},x}} \lesssim \lambda_{n+1}^{6}$.
\end{rmk} 

\section{Construction of solutions with prescribed initial condition}
\label{sec:convex_u0}

In this second part we want to modify the previous construction so to prescribe the initial value of the solution to \eqref{eq:NS}.
Let $u_0 \in L_x^2$ with $\dvg u_0 = 0$ in the sense of distributions.
We shall assume $u_0$ independent of the Brownian motion $B$ and replace the filtration $\{\mathcal{F}_t\}_{t \geq 0}$ with the augmented canonical filtration generated by $(u_0,B)$.
Moreover, without loss of generality we can suppose that 
\begin{align*}
\| u_0 \|_{L_x^2} 
\leq 
M,
\quad
\mathbb{P}-\mbox{almost surely} 
\end{align*}
for some finite constant $M$. 
Indeed, for a general initial condition $u_0 \in L_x^2$ almost surely one defines $\Omega_M := \{ M-1 \leq \| u_0 \|_{L_x^2} < M\}$.
Then, given the existence of infinitely many solutions $u_M$ on each $\Omega_M$ one can define $u := \sum_{M \in \N, M \geq 1} u_M \mathbf{1}_{\Omega_M}$, solving the equation with initial condition $u_0$.

In order to prescribe the initial value of solutions we follow the approach of \cite{BuMoSz20} and \cite{HoZhZh22b+}.
However, differently from what done in \autoref{sec:convex_energy} here we need to use the so-called \emph{Da Prato-Debussche trick} to produce a good initial triple $(v_0,q_0,\mathring{R}_0)$ to start the iteration with.
 
Formally, let $Z$ be the solution of the Stokes system with transport noise emanating from $u_0$:
\begin{align} \label{eq:Z}
\begin{cases}
d Z  + \sum_{k \in I} (\sigma_k \cdot \nabla) Z \bullet dB^k + \nabla p_Z dt = \Delta Z dt,
\\
\dvg Z = 0,
\\
Z|_{t=0} = u_0,
\end{cases}
\end{align}
and let $u$ be any solution of \eqref{eq:NS} with the same initial condition. Then the difference $V := u-Z$ solves
\begin{align} \label{eq:V}
\begin{cases}
d V  + \dvg \left( (V+Z) \otimes (V+Z) \right) dt
+
\sum_{k \in I} (\sigma_k \cdot \nabla) V \bullet dB^k + \nabla p_V \,dt = \Delta V dt,
\\
\dvg V = 0,
\\
V|_{t=0} = 0,
\end{cases}
\end{align}
which is equivalent, by composition with the flow $(v,q,z) := (V,p_V,Z) \circ \phi$, to the system
\begin{align} \label{eq:DaP-D}
\begin{cases}
\partial_t v  + \dvgphi \left( (v+z) \otimes (v+z) \right) 
+ \nablaphi q = \Dphi v,
\\
\dvgphi v  = 0,
\\
v|_{t=0} = 0.
\end{cases}
\end{align}
Viceversa, via the inverse transformation $u=(v+z) \circ \phi^{-1}$, any solution of \eqref{eq:DaP-D} yields a solution to \eqref{eq:NS} satisfying $u|_{t=0}=u_0$.
Therefore, in order to construct (multiple) solutions of \eqref{eq:DaP-D} one can introduce for every $n\in \N$ a smooth approximation $z_n$ of $z$ and solve the Navier-Stokes-Reynolds system on $t \in \R$
\begin{align} \label{eq:random_NSbis}
\begin{cases}
\partial_t v_n  + \dvgphien \left( (v_n+z_n) \otimes (v_n+z_n) \right) 
+ \nablaphien q_n  - \Dphien v_n
=
\dvgphien \mathring{R}_n,
\\
v_n|_{t = 0} = 0.
\end{cases}
\end{align}
Then, a solution to \eqref{eq:DaP-D} is obtained showing the convergences $v_n \to v$, $z_n \to z$ and $\mathring{R}_n$, $\dvgphien v_n \to 0$ in suitable spaces as $n \to \infty$. 

\begin{rmk}
Before moving on, let us first comment on the Da Prato-Debussche trick.
Usually, it is used in semilinear stochastic equations with additive noise to ``remove" the stochastic integral; this is obtained taking the difference between any hypothetical solution of the original equation and the stochastic convolution, and observing that the difference solves a random PDE.
In our case, however, since the noise is multiplicative we still have a stochastic integral in \eqref{eq:V}, and we need the flow transformation anyway to reformulate the problem as a random PDE.  
The point is that here we use the Da Prato-Debussche trick only to produce a good initial triple $(v_0,q_0,\mathring{R}_0)$. 
More specifically, the iteration will be started at $v_0 = q_0 = 0$ and $\mathring{R}_0 = z_0 \otimes z_0$, extended to negative times with their value at time $t=0$; it is in fact this extension to negative times that we are not able to perform otherwise. 
\end{rmk}

\subsubsection{Approximation of $z$}
Let us first study the Stokes system \eqref{eq:Z}. 
Via Galerkin approximation and compactness, it is easy to  obtain a probabilistically weak, analytically weak, progressively measurable solution $Z$ in $L^\infty (\Omega, L^\infty_t L_x^2 \cap L^2_t H_x^1)$ satisfying almost surely
\begin{align*}
\| Z(t) \|_{L_x^2}^2 + 2 \int_0^t \| Z(s) \|_{H_x^1}^2 ds
\leq
\| u_0 \|_{L_x^2}^2
\leq
M^2. 
\end{align*}
Moreover, pathwise uniqueness for the equation in the space $L^2 (\Omega, L^\infty_t L_x^2 \cap L^2_t H_x^1)$ descends for instance from application of \cite[Proposition A.1]{DeHoVo16} with $\psi = 1$ and $\varphi(u) = u^2$.
Thus $Z$ is a probabilistically strong solution by Yamada-Watanabe Theorem.

Moreover, by It\=o isometry one can prove for every $p<\infty$ 
\begin{align*}
\mathbb{E} \left\|Z(t)-Z(s)\right\|_{H_x^{-1}}^p
\lesssim
M^p |t-s|^{p/2},
\end{align*}
where the implicit constant depends only on $p$, and thus by Kolmogorov continuity criterion we can assume without loss of generality that $Z$ is almost surely $\gamma$-H\"older continuous for every $\gamma<1/2$, as a process taking values in $H^{-1}_x$. Thus, modifying the definition of our stopping times $\mathfrak{t}_L$ from \autoref{lem:flow} we can assume 
\begin{align*}
\|Z\|_{C^{1/4}_{\mathfrak{t}_L} H_x^{-1}} \leq C_Z L M
\end{align*} 
almost surely for some $C_Z \in (0,\infty)$. 
Similarly to what previously done in \autoref{sec:convex_energy}, for simplicity we only work up to time $\mathfrak{t}=\mathfrak{t}_1$ hereafter.
Furthermore, in this section we shall suppose $\mathfrak{t} \geq 1$ with probability larger than $1/2$ (this is always possible modifying the value of $C_Z$) and also $\mathfrak{t} \leq 2$ almost surely.

We approximate $z$ via Fourier projection in space, $z_n := \Pi_{\leq \kappa_n} z$ for some frequency cut-off $\kappa_n := \lambda_{n+1}^{\alpha/16} \to \infty$.
By Sobolev embedding we have the estimates (also use arguments as in \cite[Lemma C.2]{HoLaPa22+} to deduce $\|z\|_{C^{1/4}_{\mathfrak{t}} H_x^{-1}} \lesssim \|Z\|_{C^{1/4}_{\mathfrak{t}} H_x^{-1}}$)
\begin{align*}
\| z_n \|_{L^\infty_{\mathfrak{t},x}} 
\lesssim 
M\kappa_n^2,
\qquad
\| z_n \|_{L^\infty_\mathfrak{t}C_x^2} 
&\lesssim 
M\kappa_n^4,
\qquad
\|z_n\|_{C^{1/4}_{\mathfrak{t},x}} 
\lesssim
M\kappa_n^3.
\end{align*}
We shall also need a control on $z_{n+1}-z_n$ in $L^2_x$. Using $\|Z\|_{L^2_\mathfrak{t}H^1_x} \lesssim M$ we have
\begin{align*}
\| z_{n+1} - z_n \|_{L^2_tL^2_x} 
&\lesssim 
M
\kappa_n^{-1}.
\end{align*}
Notice that the previous bound is not uniform with respect to time. As a consequence, in this section we will iteratively control the Reynolds stress $\mathring{R}_n$ in $L^2_\mathfrak{t} L^1_x$ and the velocity $v_n$ in $L^4_\mathfrak{t} L^2_x$, in contrast with uniform-in-time controls of \autoref{sec:convex_energy}.

\subsection{Iterative assumptions}

Let the parameters $\lambda_n$, $\delta_n$, $r_\perp$, $\rpar$, $\mu$, $\ell$ and $\varsigma_n$ be given by the same formulas as in \autoref{sec:convex_energy}, but with possibly different values of the parameters $a$, $b$, $\alpha$, $\beta$.

Let $\gamma_n := 2^{-n}$ for $n \in \N \setminus \{n_0\}$ and $\gamma_{n_0} \in \{0,1\}$ be given, for some $n_0 \in \N$. 
Also, introduce a sequence of times $\tau_n := 2^{-n}$ and assume in addition $\ell \leq \varsigma_n \leq \tau_{n+1}$.
In this section, the following iterative assumptions shall be in force for some $C_v, C_q, C_R \in (0,\infty)$.

First, an assumption on the $L_x^2$ and $W_x^{1,1}$ norms of $v_n$
\begin{align} \label{est2:v} 
v_n
&=
0,
\quad
\forall t \in [-\tau_n,\tau_n \wedge \mathfrak{t}],
\\
{
\int_{-\tau_n}^\mathfrak{t}
\| v_n(t) \|^4_{L_x^2} dt}
&\leq
6 C_v^4 (n+1)^2,
\\ 
\| v_n(t) \|_{W_x^{1,1}}
&\leq
C_v \sum_{k=0}^n \delta_{k+1}^2,
\quad
\forall t \in (\tau_n \wedge \mathfrak{t} , \mathfrak{t} ].
\end{align}

Also, a control on the derivatives
\begin{gather} \label{est2:deriv}
\| v_n \|_{C^1_{\mathfrak{t},x}} 
\leq
C_v  \lambda_n^{6},
\qquad
\| v_n \|_{C^1_{\mathfrak{t}}C_x^3} 
\leq
C_v  \lambda_n^{12},
\\ \label{est2:deriv_qR}
\| q_n \|_{L^\infty_\mathfrak{t} C^2_x} 
\leq
C_q  \lambda_n^{12},
\qquad
\| \mathring{R}_n \|_{L^\infty_\mathfrak{t} C^1_x} 
\leq
C_R \lambda_n^{20} \lambda_{n+1}^{\alpha/4}.
\end{gather}

The Reynolds stress and the pressure need to be controlled in $L^2_t L^1_x$. We have
\begin{align}
\int_{-\tau_n}^{\tau_{n-1} \wedge \mathfrak{t}}
\| \mathring{R}_n(t) \|_{L_x^1}^2 dt
&\leq \label{est2:RinL1small}
C_R^2 (n+1)^2
\\
\int_{\tau_{n-1} \wedge \mathfrak{t}}^{\mathfrak{t}}
\| \mathring{R}_n(t) \|_{L_x^1}^2 dt
&\leq \label{est2:RinL1large}
C_R^2 \delta_{n+1}^2,
\\
\int_{-\tau_n}^{\mathfrak{t}}
\| q_n(t) \|_{L_x^1}^2 dt
&\leq \label{est2:press}
C_q^2 \lambda_n^{1/500}.
\end{align}

Finally, the estimates on the divergence
\begin{align} \label{est2:diverg}
{\| \dvgphien v_n \|_{L^4_\mathfrak{t} H_x^{-1}}}
\leq 
C_v  
\delta_{n+2}^3,
\qquad
\| \dvgphien v_n \|_{L^\infty_\mathfrak{t} L_x^1}
\leq 
C_v  
\delta_{n+2}^3.
\end{align}

\begin{prop} \label{prop:it_bis}
There exist a choice of parameters $a,b,\alpha,\beta$ and a constant $C_e \in (0,\infty)$ with the following property.
Let $(v_n,z_n,q_n,\phi_n,\mathring{R}_n)$, $n \in \N$ be a solution of \eqref{eq:random_NSbis} with $z_n = \Pi_{\leq \kappa_n} z$ and $\phi_n$ given by \eqref{eq:def_phi_n}, and satisfying the inductive estimates \eqref{est2:v} to \eqref{est2:diverg}.
Then there exists a quintuple $(v_{n+1},z_{n+1},q_{n+1},\phi_{n+1},\mathring{R}_{n+1})$, solution of \eqref{eq:random_NSbis} with $z_{n+1} = \Pi_{\leq \kappa_{n+1}} z$ and $\phi_{n+1}$ given by \eqref{eq:def_phi_n}, satisfying the same inductive estimates with $n$ replaced by $n+1$ and such that
\begin{align} 
\int_{\tau_{n-2} \wedge \mathfrak{t}}^\mathfrak{t}
\| v_{n+1}(t)-v_n(t) \|_{L^2_x}^4 dt
&\leq \label{ass:it_vL_uno}
C_v^4 \gamma_{n+1}^2,
\\
\int_{\tau_{n+1} \wedge \mathfrak{t}}^{\tau_{n-2} \wedge \mathfrak{t}}
\| v_{n+1}(t)-v_n(t) \|_{L^2_x}^4 dt
&\leq \label{ass:it_vL_due}
C_v^4 (n+1)^2,
\\
\|v_{n+1}(t)-v_n(t) \|_{L_x^2}
&= \label{ass:it_vL_tre}
0,
\quad
\forall t \in [-\tau_{n+1}, \tau_{n+1} \wedge \mathfrak{t}],
\\
\|v_{n+1}(t)-v_n(t) \|_{W_x^{1,1}} \label{ass:it_vW}
&\leq
C_v \delta_{n+2}^2,
\quad
\forall t \in [-\tau_{n+1}, \mathfrak{t}],
\\
\label{ass:energy_pump}
\int_{\tau_{n-2} \wedge \mathfrak{t}}^\mathfrak{t}
| \| v_{n+1} (t)\|_{L_x^2}^2 -\| v_n (t)\|_{L_x^2}^2 - 3 \gamma_{n+1} |\, dt 
&\leq 
C_e \delta_{n+1}.
\end{align}
\end{prop}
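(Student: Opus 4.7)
The plan is to adapt the convex integration scheme of \autoref{sec:convex_energy} to the modified system \eqref{eq:random_NSbis}, incorporating three novelties relative to \autoref{prop:it}. First, I will multiply the amplitude functions of the perturbation by a smooth temporal cutoff $\chi_{n+1}(t)$ vanishing on $[-\tau_{n+1},\tau_{n+1}]$ and equal to one outside a slightly larger interval, so that the whole perturbation $w_{n+1}$, and hence $v_{n+1}$, vanishes on $[-\tau_{n+1},\tau_{n+1}\wedge\mathfrak{t}]$; this yields \eqref{ass:it_vL_tre} and restores the vanishing part of \eqref{est2:v} at level $n+1$. Second, the variable energy profile $e(t)$ of \autoref{sec:convex_energy} is replaced by the constant pumping amplitude $\gamma_{n+1}$: with $\rho := 2\sqrt{\ell^2+|\mathring{R}_\ell|^2} + \chi_{n+1}^2\gamma_{n+1}$, one has $\int_{\T^3}\rho\,dx \approx 3\gamma_{n+1}$ up to $\mathring{R}_\ell$ and $\ell$ corrections, which drives \eqref{ass:energy_pump}. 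Third, the Da Prato--Debussche correction $z_n$ introduces additional bilinear cross-terms $z_n\otimes v_n + v_n\otimes z_n + z_n\otimes z_n$ in \eqref{eq:random_NSbis}, which must be incorporated consistently into the Reynolds stress and the auxiliary pressure.

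With these modifications in place, the three components $w^{(p)}_{n+1}, w^{(c)}_{n+1}, w^{(t)}_{n+1}$ are defined verbatim as in \autoref{ssec:velocity_est}. All $L^p_x$, derivative and divergence estimates carry over routinely, since the time derivatives of $\chi_{n+1}$ are bounded by $\tau_{n+1}^{-1}$, which is dominated by $\varsigma_{n+1}^{-1}$ and $\lambda_{n+1}$. Consequently the inductive derivative controls \eqref{est2:deriv}--\eqref{est2:deriv_qR}, the $W^{1,1}_x$ increment \eqref{ass:it_vW}, and the divergence bounds \eqref{est2:diverg} follow as in \autoref{ssec:velocity_est} via \eqref{eq:divergence} and \autoref{lem:G_holder}. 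The new $L^4_\mathfrak{t}L^2_x$ increments \eqref{ass:it_vL_uno}--\eqref{ass:it_vL_due} arise from the pointwise estimate $\|w_{n+1}(t)\|_{L^2_x}^2 \lesssim \gamma_{n+1}\chi_{n+1}^2(t) + \|\mathring{R}_\ell(t)\|_{L^1_x}$ after time integration, combined with the two-scale control of $\mathring{R}_n$ in \eqref{est2:RinL1small}--\eqref{est2:RinL1large}; the vanishing of $\chi_{n+1}$ near $t=0$ confines the $\gamma_{n+1}$ contribution to the good region $[\tau_{n+1},\mathfrak{t}]$.

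The Reynolds stress $\mathring{R}_{n+1}$ is defined as in \eqref{eq:R&q} with the $z_n$-quadratic term included on the left-hand side, and its analysis in \autoref{ssec:R&P} is augmented by three new error pieces: a \emph{$z$-linear error} $\dvgphin(z_n \otimes w_{n+1} + w_{n+1}\otimes z_n)$, bounded in $L^p_x$ by $\|z_n\|_{L^\infty_{\mathfrak{t},x}}\|w_{n+1}\|_{L^p_x}$; a \emph{$z$-quadratic error} $\dvgphin(z_{n+1}\otimes z_{n+1} - (z_n\otimes z_n)\ast\chi_\ell)$, handled by telescoping using $\|z_{n+1}-z_n\|_{L^2_\mathfrak{t}L^2_x}\lesssim M\kappa_n^{-1}$ together with $\|z_n\|_{L^\infty_\mathfrak{t}C^2_x}\lesssim M\kappa_n^4$; and a \emph{$z$-mollification error} analogous to those in \autoref{ssec:R&P}. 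All remaining errors (linear, oscillation, mollification, flow and corrector) are handled as in \autoref{sec:convex_energy}, with a further time integration contributing at most $\sqrt{\mathfrak{t}}\leq\sqrt{2}$, yielding \eqref{est2:RinL1small}--\eqref{est2:RinL1large}. The pressure bound \eqref{est2:press} is extracted from the auxiliary Navier--Stokes--Reynolds system of \autoref{ssec:pressure_est}, augmented with a $\dvg(z_n\otimes z_n)$ source; the Schauder argument there gives $\|\tilde p_n\|_{L^1_x}\lesssim \lambda_n^{1/1000}$ provided $\kappa_n$ is chosen as a small power of $\lambda_{n+1}$.

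The main obstacle I expect is the tension between the low time-regularity of $z_n$ --- whose time derivative is not controlled uniformly --- and the time-integrated iterative bounds in $L^2_\mathfrak{t}L^1_x$ and $L^4_\mathfrak{t}L^2_x$ that replace the uniform-in-time bounds of \autoref{sec:convex_energy}. The tuning $\kappa_n := \lambda_{n+1}^{\alpha/16}$ with $\alpha$ small must ensure simultaneously that the $z$-linear error is absorbed into $\delta_{n+2}^2$, that the augmented auxiliary pressure stays of order $\lambda_{n+1}^{1/1000}$, and that the cutoff $\chi_{n+1}$ does not disrupt the oscillation cancellation of \autoref{ssec:oscillation}. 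Once these parameter choices are fixed, the verification reduces to careful bookkeeping along the lines of \autoref{sec:convex_energy}.
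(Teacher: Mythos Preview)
Your proposal is correct and follows essentially the same approach as the paper. The only organizational differences are that the paper applies the cutoff $\chi$ externally to the already-built perturbations (setting $\tilde w^{(p)}_{n+1}:=\chi w^{(p)}_{n+1}$, $\tilde w^{(c)}_{n+1}:=\chi w^{(c)}_{n+1}$, $\tilde w^{(t)}_{n+1}:=\chi^2 w^{(t)}_{n+1}$) rather than to the amplitudes, leaves $\rho=2\sqrt{\ell^2+|\mathring R_\ell|^2}+\gamma_{n+1}/(2\pi)^3$ without a cutoff, groups the $z$-contributions into the linear error (with $v_\ell+z_{n+1}$ in place of $v_\ell$) plus a single ``corrector error II'' involving $z_{n+1}-z_\ell$, and explicitly isolates a \emph{cut-off error} $w^{(p+c)}_{n+1}\chi'+2w^{(t)}_{n+1}\chi\chi'+(1-\chi^2)\dvgphin\mathring R_\ell$ in the Reynolds decomposition---the last summand $(1-\chi^2)\mathring R_\ell$ being precisely the uncancelled residual near $t=0$ that propagates the factor $(n+1)$ in \eqref{est2:RinL1small}.
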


\begin{proof}[Proof of \autoref{thm:u_0}]
We apply \autoref{prop:it_bis} iteratively, starting the iteration from $v_0 = q_0 = 0$, $\mathring{R}_0 = z_0 \otimes z_0$.
Recall that by definition $z_0 = \Pi_{\leq 1} z$, so that $\| \mathring{R}_0 \|_{L^\infty_\mathfrak{t}C^1_x} \lesssim \|u_0\|_{L_x^2}^2 \leq M^2$ and the iterative assumptions hold true.
Assumptions \eqref{ass:it_vL_uno}-\eqref{ass:it_vL_due}-\eqref{ass:it_vL_tre} (resp. \eqref{ass:it_vW}) guarantee that $\{ v_n\}_{n \in \N}$ is a Cauchy sequence in $L^p_\mathfrak{t} L^2_x$ for every {$p < 4$} (resp. in $C_\mathfrak{t}W^{1,1}_x$). Denote $v := \lim_{n \to \infty} v_n$, which satisfies $v|_{t=0} = 0$ by \eqref{est2:v} and is progressively measurable. 
By \eqref{est2:RinL1small} and \eqref{est2:RinL1large} we deduce $\mathring{R}_n \to 0$ in $L^1_{\mathfrak{t},x}$. Taking into account also \eqref{est2:diverg} and $z_n \to z$ in $L^2_{\mathfrak{t},x}$ we deduce that $v$ is a solution of \eqref{eq:DaP-D} on the time interval $[0,\mathfrak{t}]$. 
The regularity claimed in the statement of the theorem holds true for $u = (v+z) \circ \phi^{-1}$ since with probability one $v \in L^p_\mathfrak{t} L^2_x \cap C_\mathfrak{t}W^{1,1}_x$ for every $p<4$ and $z \in L^\infty_\mathfrak{t} L^2_x \cap L^2_\mathfrak{t} H^1_x \cap C^{1/4}_\mathfrak{t} H^{-1}_x$.
General time intervals $[0,\mathfrak{t}_L]$ are dealt with including a factor $L_n=L^{m^{n+1}}$ in the iterative assumptions, $m<b$. 

Next, we prove non-uniqueness of solutions.
Choose $n_0 \geq 4$ such that
\begin{align*}
\int_{\tau_{n_0-3} \wedge \mathfrak{t}}^\mathfrak{t}
| \|v_{n_0}(t)\|^2_{L_x^2} - \|v_{n_0-1}(t)\|^2_{L_x^2} - 3\gamma_{n_0} | dt
\leq 1/3.
\end{align*}
This distinguishes the two solutions $v^1,v^2$ obtained imposing $\gamma_{n_0}=1$ and $\gamma_{n_0}=0$ respectively. Indeed, $\mathfrak{t} > 1$ with probability at least $1/2$, and thus the length of the time interval $[\tau_{n_0-3} \wedge \mathfrak{t}, \mathfrak{t}]$ is at least $1/2$ (since $n_0 \geq 4$) with probability at least $1/2$. Thus it must be
\begin{align*}
\left| \int_{\tau_{n_0-3} \wedge \mathfrak{t}}^{\mathfrak{t}} \left(  
 \|v^1(t)\|^2_{L_x^2} - \|v^1_{n_0-1}(t)\|^2_{L_x^2} - 3 \right) dt \right|
&\leq
1/3 + |\mathfrak{t}| \sum_{n > n_0} \gamma_n
\leq
2/3,
\\
\left| \int_{\tau_{n_0-3} \wedge \mathfrak{t}}^{\mathfrak{t}} \left(  
 \|v^2(t)\|^2_{L_x^2} - \|v^2_{n_0-1}(t)\|^2_{L_x^2} \right) dt \right|
&\leq
1/3 + |\mathfrak{t}| \sum_{n > n_0} \gamma_n
\leq
2/3,
\end{align*}
where we used $\mathfrak{t} \leq 2$ almost surely. Since $v^1_{n_0-1}=v^2_{n_0-1}$ we have
\begin{align*}
\int_{\tau_{n_0-3} \wedge \mathfrak{t}}^{\mathfrak{t}} \|v^1(t)\|^2_{L_x^2} dt 
\geq  
-2/3 + 3/2 + \int_{\tau_{n_0-3} \wedge \mathfrak{t}}^{\mathfrak{t}} \|v^1_{n_0-1}(t)\|^2_{L_x^2} dt 
\geq 
-4/3 + 3/2 +
\int_{\tau_{n_0-3} \wedge \mathfrak{t}}^{\mathfrak{t}} \|v^2(t)\|^2_{L_x^2} dt 
\end{align*}
with probability at least $1/2$.
\end{proof}

\subsection{Convex integration scheme}
In the remainder of the section we prove \autoref{prop:it_bis}. 
Since the construction is similar to that of \autoref{sec:convex_energy}, many estimates will be similar to those already seen before, and they will be omitted when possible.

\subsubsection{Perturbation of the velocity}

Following \cite{HoZhZh22b+}, define
\begin{align*}
\rho 
&:= 
2 \sqrt{\ell^2 + |\mathring{R}_\ell|^2} + \frac{\gamma_{n+1}}{(2\pi)^3}.
\end{align*} 
The amplitude functions $\mathbf{a}_\xi$ and $a_\xi$ are defined accordingly via the formula \eqref{eq:amplitude}, as well as the perturbations $w^{(p)}_{n+1}$, $w^{(c)}_{n+1} := w^{(c,1)}_{n+1} + w^{(c,2)}_{n+1}$ and $w^{(t)}_{n+1}$, formulas \eqref{eq:def_wp}, \eqref{eq:def_wc1}, \eqref{eq:def_wc2}, and \eqref{eq:def_wt} respectively.
We point out that, although we use the same symbols as in \autoref{sec:convex_energy}, here the objects are different because $\rho$ and $\mathring{R}_n$ are different. 
Then, given a smooth non-decreasing cut-off $\chi:\R \to [0,1]$ identically equal to $0$ on $(-\infty,\tau_{n+1}]$ and to $1$ on $[\tau_n, \infty)$, define
\begin{align*}
\tilde{w}^{(p)}_{n+1}
:=
{w}^{(p)}_{n+1} \chi,
\quad
\tilde{w}^{(c)}_{n+1}
:=
{w}^{(c)}_{n+1} \chi,
\quad
\tilde{w}^{(t)}_{n+1}
:=
{w}^{(t)}_{n+1} \chi^2.
\end{align*}

\subsubsection{Estimates on the velocity}

By definition we have for every $t \leq \mathfrak{t}$
\begin{align} \label{eq:rhoL1}
\| \rho (t) \|_{L_x^1} 
&\lesssim
\ell 
+
\| \mathring{R}_\ell(t) \|_{L_x^1}
+
\gamma_{n+1},
\\
\|a_\xi(t)\|_{L_x^2}  \label{eq:aL2}
&=
\|\mathbf{a}_\xi(t)\|_{L_x^2}
\lesssim
\| \rho (t)\|_{L_x^1}^{1/2},
\end{align} 
with universal implicit constant.
By Sobolev embedding $W^{5,1}_{\mathfrak{t},x} \subset C_{\mathfrak{t},x}$ and the bounds on $\mathring{R}_n$ in $L^2_\mathfrak{t} L^1_x$ given by \eqref{est2:RinL1small} and \eqref{est2:RinL1large} then we have 
\begin{align}
%
%
%
\| a_\xi \|_{C_\mathfrak{t} C_x}^2 \label{eq:aCsmall}
&=
\| \mathbf{a}_\xi \|_{C_\mathfrak{t}C_x}^2
\lesssim 
\| \rho \|_{C_\mathfrak{t}C_x} 
\lesssim 
\ell^{-5} (n+1),
\end{align}
whereas by (3.25) and (3.34) therein we have for every $N = 1,2,...10$ and $M=0,1$
\begin{align}
\| \rho \|_{C^N_{\mathfrak{t},x}} 
&\lesssim  \nonumber
\ell^{2-8N} (n+1),
\\
\| a_\xi \|_{C^M_{\mathfrak{t}}C^N_x} 
&\lesssim \label{eq:aCN}
\varsigma_{n+1}^{-M}
\| \mathbf{a}_\xi \|_{C^{M+N}_{\mathfrak{t},x}}
\lesssim
\varsigma_{n+1}^{-M} \ell^{-10-8(M+N)} (n+1)^{1/2}.
\end{align}

The different exponents of $\ell$ in the previous lines with respect to those in \autoref{sec:convex_energy} are due to the fact that our iterative assumption on $\mathring{R}_n$ is a bound in $L^2_\mathfrak{t}L^1_x$ instead of $C_\mathfrak{t}L^1_x$.

Let us now give estimates for the velocity increments.
For $t \in [-\tau_{n+1},\tau_{n+1} \wedge \mathfrak{t}]$ we have $\tilde{w}_{n+1} (t) \equiv 0$ because of the cut-off, and therefore \eqref{ass:it_vL_tre} holds.

The principal part of the perturbation $\tilde{w}^{(p)}_{n+1}$ is controlled in $L^2_x$ as follows. By \eqref{eq:rhoL1}, \eqref{eq:aL2}, \eqref{eq:aCN} and \cite[Lemma 7.4]{BuVi19b} one has for every $t \leq \mathfrak{t}$
\begin{align*}
\| \tilde{w}_{n+1}^{(p)} (t)\|_{L_x^2}
=
\| (\tilde{w}_{n+1}^{(p)}\circ \phi_{n+1}^{-1}) (t)\|_{L_x^2}
\lesssim
\| \mathring{R}_{\ell}(t) \|_{L^1_x}^{1/2} 
+
\gamma_{n+1}^{1/2}.
\end{align*}
In particular, by assumptions \eqref{est2:RinL1small} and \eqref{est2:RinL1large} it holds for some universal $C_v \in (0,\infty)$
\begin{align*}
\int_{\tau_{n-2} \wedge \mathfrak{t}}^\mathfrak{t}
\| \tilde{w}_{n+1}^{(p)}(t) \|_{L^2_x}^4 dt
&\leq \frac{C_v^4}{2}
\gamma_{n+1}^2,
\\
\int_{\tau_{n+1} \wedge \mathfrak{t}}^{\tau_{n-2} \wedge \mathfrak{t}}
\| \tilde{w}_{n+1}^{(p)}(t) \|_{L^2_x}^4 dt
&\leq \frac{C_v^4}{2}
(n+1)^2.
\end{align*} 

On the other hand, using \eqref{eq:aCsmall} we have for $p \in (1,\infty)$ and $t \leq \mathfrak{t}$ (recall that $\delta_{n+1} \lesssim \gamma_{n+1} \lesssim n+1$ for every $n$)
\begin{align*}
\| \tilde{w}^{(p)}_{n+1}(t) \|_{L_x^p}
&\lesssim
(n+1)^{1/2}
\ell^{-10} r_\perp^{2/p-1} \rpar^{1/p-1/2},
\\
\| \tilde{w}^{(c,2)}_{n+1}(t) \|_{L_x^p}
&\lesssim
(n+1)^{1/2}
\ell^{-26} r_\perp^{2/p} \rpar^{1/p-3/2},
\\
\| \tilde{w}^{(t)}_{n+1}(t) \|_{L_x^p}
&\lesssim
(n+1)\,
\ell^{-20} r_\perp^{2/p-1} \rpar^{1/p-2} \lambda_{n+1}^{-1}.
\end{align*}

Finally, $\|w^{(c,1)}_{n+1}\|_{L^4_\mathfrak{t}L_x^2} \lesssim \| \dvgphien v_n \|_{L^4_\mathfrak{t}H_x^{-1}} \lesssim \delta_{n+2}^3$. Thus \eqref{ass:it_vL_uno} and \eqref{ass:it_vL_due} hold true taking $p=2$ above and $a$ sufficiently large.

Also, arguing as in \autoref{sec:convex_energy} it is easy to check \eqref{ass:it_vW} and \eqref{est2:deriv} (all the additional factors $\ell^{-1}$, as well as the time derivative of the cut-off $|\chi'|\lesssim 2^n$ can be absorbed into some positive power of $\lambda_{n+1}$).
Moreover, the estimates on the divergence \eqref{est2:diverg} descend from the bounds on $\|v_n\|_{L^4_\mathfrak{t}L_x^2}$, $\|v_n\|_{L^\infty_\mathfrak{t}W_x^{1,1}}$ as in \autoref{sec:convex_energy}, and are omitted.

\subsubsection{Estimate on the energy}

The energy estimate \eqref{ass:energy_pump}, contrary to the corresponding assumption \eqref{est:energy} of \autoref{sec:convex_energy}, does not provide a prescribed energy profile for the velocity field; instead, it serves to quantify how much energy is pumped into the system with the perturbation $\tilde{w}_{n+1}$.
Incidentally, tuning the parameter $\gamma_{n_0}$ as in the proof of \autoref{thm:u_0}, this gives non-uniqueness of solutions.

Let $t \in [0,\mathfrak{t}]$ be given. By the same computations as in \autoref{sec:convex_energy}, we have
\begin{align*}
| \|v_{n+1}(t)\|^2_{L_x^2} - \|v_n(t)\|^2_{L_x^2} - 3\gamma_{n+1} | \lesssim \delta_{n+1} + \|\mathring{R}_\ell(t)\|_{L_x^1}. 
\end{align*}
Therefore, since we are assuming $\mathfrak{t} \leq 2$ almost surely and using \eqref{est2:RinL1large}, we obtain
\begin{align*}
\int_{\tau_{n-2} \wedge \mathfrak{t}}^\mathfrak{t}
| \|v_{n+1}(t)\|^2_{L_x^2} - \|v_n(t)\|^2_{L_x^2} - 3\gamma_{n+1} | dt
\lesssim
\delta_{n+1},
\end{align*}
where the implicit constant is universal (denoted $C_e$ in \eqref{ass:energy_pump}).

\subsubsection{The pressure $q_{n+1}$ and Reynolds stress $\mathring{R}_{n+1}$}
We shall define the new pressure as 
\begin{align*}
q_{n+1} 
= 
q_\ell
-
( \rho + P \circ \phi_{n+1} )\chi^2 ,
\end{align*} 
where $P$ has zero space average and is given implicitly by
\begin{align*}
\nabla P =
\frac{1}{\mu} \mathcal{Q}
\left(
\Pi_{\neq 0} 
\sum_{\xi \in \Lambda}  
\partial_t  \left({a}_{\xi}^2 \theta_{\xi}^2 \psi_{\xi}^2 \xi \right)
\right).
\end{align*}

Let us recover the expression for the Reynolds stress at level $n+1$. Denote for simplicity $G_n := (v_n+z_n) \otimes (v_n+z_n) + q_n Id - \mathring{R}_n$ and $G_\ell := G_n \ast \chi_\ell$. It holds
\begin{align*}
\partial_t v_{n+1} &+ \dvgphin \left( (v_{n+1}+z_{n+1}) \otimes (v_{n+1}+z_{n+1}) \right) 
+ \nablaphin q_{n+1}  - \Dphin v_{n+1}
\\
&= \nonumber
\underbrace{\left[\chi\partial_t w^{(p+c)}_{n+1} + \dvgphin(\tilde{w}_{n+1} \otimes (v_\ell+z_{n+1}) + (v_\ell+z_{n+1}) \otimes \tilde{w}_{n+1} ) - \Dphin \tilde{w}_{n+1} \right]}_{=\,linear\, error}
\\
&\quad+ \nonumber
\underbrace{\left[\chi^2 \dvgphin\left(w^{(p)}_{n+1} \otimes w^{(p)}_{n+1} +\mathring{R}_\ell\right) + \chi^2 \partial_t w^{(t)}_{n+1} + {\nablaphin(q_{n+1}-q_\ell)}\right]}_{=\,oscillation\, error}
\\
&\quad+ \nonumber
\underbrace{\left[ w^{(p+c)}_{n+1}\chi' + 2 w^{(t)}_{n+1} \chi \chi' + (1-\chi^2) \dvgphin \mathring{R}_\ell  \right]}_{=\,cut-off\, error}
\\
&\quad+ \nonumber
\underbrace{\left[ \dvgphin((v_\ell+z_\ell) \otimes (v_\ell+z_\ell) - ((v_n+z_n) \otimes (v_n+z_n)) \ast \chi_\ell)\right]}_{=\,mollification\, error\,I}
\\
&\quad+ \nonumber
\underbrace{\left[
\chi_\ell\ast\Dphien v_n - \Dphien v_\ell
+ 
\dvgphien G_\ell 
-
\left( \dvgphien G_n  \right) \ast \chi_\ell
 \right]}_{=\,mollification\, error\, II}
\\
&\quad+ \nonumber
\underbrace{\left[(\Dphien - \Dphin)\, v_\ell +\left(\dvgphin-\dvgphien\right)
G_\ell \right]}_{=\,flow\, error}
\\
&\quad+ \nonumber
\underbrace{\left[\dvgphin( \tilde{w}^{(p)}_{n+1} \otimes \tilde{w}^{(c+t)}_{n+1} + \tilde{w}^{(c+t)}_{n+1} \otimes \tilde{w}_{n+1}) \right]}_{=\,corrector\, error}
\\
&\quad+ \nonumber
\underbrace{\left[ \dvgphin( (z_{n+1}-z_\ell) \otimes (v_\ell+z_{n+1}) + (v_\ell+z_\ell) \otimes (z_{n+1}-z_\ell)) \right]}_{=\,corrector \, error\,II}.
\end{align*}

Then, the estimates for $\mathring{R}_{n+1}$ and $q_{n+1}$ descend as in \autoref{sec:convex_energy}. More precisely, first one proves \eqref{est2:deriv_qR} using the construction of $q_{n+1}$ and the equation satisfied by $\mathring{R}_{n+1}$, plus the estimates on the velocity and $\|z_{n+1}\|_{L^\infty_\mathfrak{t} C_x^2} \lesssim M \kappa_{n+1}^{-4} = M \lambda_{n+2}^{\alpha/4}$. Notice that the presence of $z_{n+1}$ is the reason why we have the additional factor $\lambda_{n+2}^{\alpha/4}$ in the $L^\infty_\mathfrak{t} C_x^1$ norm of $\mathring{R}_{n+1}$.

Then, the bounds \eqref{est2:RinL1small} and \eqref{est2:RinL1large} are recovered with the usual approach, with only minor differences:
\begin{itemize}
\item
The many factors $(n+1)$ in the iterative estimates clearly play no role, since $\delta_n$, $\lambda_n^{-1}$ decay exponentially fast.
Moreover, the additional $\lambda_{n+2}^{\alpha/4}$ in the $L^\infty_\mathfrak{t} C_x^1$ norm of $\mathring{R}_{n+1}$ does not affect the estimate in $L^2_\mathfrak{t} L^1_x$, since the former only enters in the mollification error and still $\ell^{1/4} \lambda_n^{20} \lambda_{n+1}^{\alpha/4} = \ell^{-\alpha/8} \lambda_n^{-5} \lesssim \delta_{n+2}^2$ can be made arbitrary small.
\item
The term involving the increment $z_{n+1}-z_\ell$ can be bound in $L^2_{\mathfrak{t},x}$ observing that
\begin{align*}
\|z_{n+1} - z_n \|_{L^2_\mathfrak{t}L^2_x}
&\lesssim
M \kappa_n^{-1}
=
M \lambda_{n+1}^{-\alpha/16}
\lesssim
\delta_{n+2}^2,
\\
\|z_n - z_\ell \|_{L^2_\mathfrak{t}L^2_x}
&\lesssim
\ell^{1/4}
\|z_n\|_{C^{1/4}_{\mathfrak{t},x}}
\lesssim
M \ell^{1/4} \kappa_n^{3}
\lesssim
\delta_{n+2}^2.
\end{align*}
\item
The new cut-off error
\begin{align*}
\mathcal{R}^{\phi_{n+1}}
\left( w^{(p+c)}_{n+1}\chi' + 2 w^{(t)}_{n+1} \chi \chi' + (1-\chi^2) \dvgphin \mathring{R}_\ell  \right)
\end{align*}
is such that the perturbations $w^{(p+c)}_{n+1}$ and $w^{(t)}_{n+1}$ are sufficiently small in $L^2_\mathfrak{t} L_x^p$, $p>1$ close to one, to compensate for the additional factor $2^n$ coming from $|\chi'|$. 
Finally, the remaining $\mathcal{R}^{\phi_{n+1}} (1-\chi^2) \dvgphin \mathring{R}_\ell = (1-\chi^2) \mathring{R}_\ell$ gives the factor $n+1$ for $t \leq \tau_{n}$.
\end{itemize}

To conclude, \eqref{est2:press} is deduced by arguments similar to those of \autoref{sec:convex_energy}.

\bibliographystyle{plain}

\end{document}